\renewcommand{\email}[2][]{%
	\ifx\emails\@empty\relax\else{\g@addto@macro\emails{,\space}}\fi%
	\@ifnotempty{#1}{\g@addto@macro\emails{\textrm{(#1)}\space}}%
	\g@addto@macro\emails{#2}%
}
\newtheorem{thm}{Theorem}[section]
\newtheorem{lem}[thm]{Lemma}
\newtheorem{prop}[thm]{Proposition}
\theoremstyle{definition}
\theoremstyle{remark}
\DeclareMathOperator{\supp}{supp}
\newcommand{\JN}{\textup{JN}}
\newcommand{\BMO}{\textup{BMO}}
\newcommand{\RR}{\mathbb{R}}
\newcommand{\NN}{\mathbb{N}}
\newcommand{\CC}{\mathbb{C}}
\numberwithin{equation}{section}
\setlist[enumerate,1]{label=(\alph*)}
\begin{document}
\raggedbottom 

    \title[Gaussian $\textup{JN}_p$ spaces]
    {Gaussian $\textup{JN}_p$ spaces}

    \author[J. J. Betancor]{Jorge J. Betancor$^1$}
    \address{$^1$Departamento de An\'alisis Matem\'atico, Universidad de La Laguna,\newline
        Campus de Anchieta, Avda. Astrof\'isico S\'anchez, s/n,\newline
        38721 La Laguna (Sta. Cruz de Tenerife), Spain}
    \email{jbetanco@ull.es}

    \author[E. Dalmasso]{Estefan\'ia Dalmasso$^{2}$}
    \address{$^2$Instituto de Matem\'atica Aplicada del Litoral, UNL, CONICET, FIQ.\newline Colectora Ruta Nac. Nº 168, Paraje El Pozo,\newline S3007ABA, Santa Fe, Argentina}
    \email[Corr. Author]{edalmasso@santafe-conicet.gov.ar}

    \author[P. Quijano]{Pablo Quijano$^{2}$}
    \email{pquijano@santafe-conicet.gov.ar}
    
    \date{\today}
    \subjclass{42B15, 42B20, 42B25, 42B35}

    \keywords{}

    \begin{abstract}
     In this paper we introduce the John-Nirenberg's type spaces $\JN_p$ associated with the Gaussian measure $d\gamma(x) = \pi^{-d/2}e^{-|x|^2}dx$ in $\RR^d$ where $1<p<\infty$. We prove a John-Nirenberg inequality for $\JN_p(\RR^d,\gamma)$. We also characterize the predual of $\JN_p(\RR^d,\gamma)$ as a Hardy type space.
    \end{abstract}
    \maketitle

\section{Introduction}

In~\cite{JN}, John and Nirenberg introduced the well-known space $\BMO(\RR^d)$ of functions with bounded mean oscillation. Also, they considered a variant of the $\BMO$ condition. This other condition is used to define the space of functions $\JN_p(\RR^d)$, $1<p<\infty$, as follows. Let $Q_0$ be a cube in $\RR^d$ and $1<p<\infty$. We always assume that the cubes have sides parallel to the coordinate axis and they are open. A function $f\in L^1(Q_0)$ is said to be in $\JN_p(Q_0)$ when 
\begin{equation*}
    \|f\|_{\JN_p(Q_0)} \coloneqq \sup \left(
    \sum_{i}|Q_i| \left( \frac{1}{|Q_i|} \int_{Q_i} |f-f_{Q_i}|dx\right)^p\right)^{1/p}<\infty,
\end{equation*}
where the supremum is taken over all the countable families $\{Q_i\}_{i=1}^{\infty}$ of pairwise disjoint cubes in $Q_0$ and $f_{Q_i}$ stands for the average of $f$ over the cube $Q_i$. 
Similarly, a function $f\in L^1_{\textup{loc}}(\RR^d)$ is in $\JN_p(\RR^d)$ when $\|f\|_{\JN_p(\RR^d)}<\infty$, where $\|\cdot\|_{\JN_p(\RR^d)}$ is defined analogously.

$\JN_p$ spaces were considered in the context of interpolation by Campanato~\cite{Ca} and Stampacchia~\cite{Stam}. More recently, in the last decade a number of papers have investigated about $\JN_p$ spaces (\cite{ABKY}, \cite{BKM}, \cite{DoMi}, \cite{HMV}, \cite{McP} and \cite{Mi}, for instance). Related with the $\JN_p$ spaces are the dyadic $\JN_p$ spaces (\cite{KM}), the John-Nirenberg-Campanato spaces (\cite{TYY}), localized versions of $\JN_p$ spaces (\cite{SXY}) and the sparse $\JN_p$ spaces (\cite{DoMi}), among others.

Other definitions of $\JN_p$ spaces appear when the cubes are replaced by other classes of sets in more general measure metric spaces. Depending on the overlapping properties of the chosen sets we can obtain different spaces.

In~\cite{Jo}, John studied $\BMO$ spaces using medians instead of integral averages. From the results in~\cite{Str} and~\cite{StrTor} it can be deduced that $\BMO$ spaces defined by using medians and averages coincide. Recently, median-type John-Nirenberg spaces in metric measure spaces have been studied in~\cite{My}. 

It is not hard to see that $L^p\subset \JN_p$. Also we have that $\JN_p \subset L^{p,\infty}$. Further, both of these inclusions are strict. An example of a function $f\in \JN_p(I)\setminus L^p(I)$ where $I$ is an interval in $\RR$ was defined in~\cite{DHKY}. Previously, some results related to the nonequality $L^p\neq \JN_p$ were contained in~\cite{McP}. In~\cite{ABKY}, it was proved that $\JN_p\neq L^{p,\infty}$. Other examples of functions in $\JN_p$ spaces have been constructed in~\cite{Ta}.

Our objective in this paper is to introduce and to study the $\JN_p$ spaces associated with the Gaussian measure $d\gamma(x) = \pi^{-d/2} e^{-|x|^2}dx$ on $\RR^d$ that we name $\JN_p(\RR^d,\gamma)$ with $1<p<\infty$.

We consider the function $m$ defined on $\RR^d$ by
\begin{equation*}
    m(x) = \begin{cases} 
      1              & \mbox{if } x= 0,  \\
      \min\left\{1,\frac{1}{|x|}\right\}& \mbox{if } x \neq 0.
   \end{cases}
\end{equation*}
If $B$ is a ball in $\RR^d$ we denote by $c_B$ and $r_B$ the center and the radius of $B$, respectively. Let $a>0$. By $\mathcal{B}_a$ we represent the family of balls $B$ in $\RR^d$ satisfying $r_B\leq a m(c_B)$. It is usual to name the balls in $\mathcal{B}_a$ \textbf{admissible balls} with parameter~$a$. The Gaussian measure has not the doubling property. However, the Gaussian measure is doubling on $\mathcal{B}_a$ but the doubling constant depends on $a$ (\cite[Proposition 2.1]{MM}).

The bounded mean oscillation function space associated with $\gamma$ in $\RR^d$, in short $\BMO(\RR^d,\gamma)$, was introduced in~\cite{MM}. A function $f\in L^1(\RR^d,\gamma)$ is said to be in $\BMO(\RR^d,\gamma)$ when
\begin{equation*}
    \|f\|_{\star, \mathcal{B}_1} = \sup_{B\in\mathcal{B}_1} \frac{1}{\gamma(B)}\int_B |f-f_B| d\gamma <\infty,
\end{equation*}
where, for a function $f$ and a ball $B$, $f_B = \frac{1}{\gamma(B)}\int_B fd\gamma$. The space $\BMO(\RR^d,\gamma)$ is endowed with the norm
\begin{equation*}
    \|f\|_{\BMO(\RR^d,\gamma)} = \|f\|_{L^1(\RR^d,\gamma)} + \|f\|_{\star, \mathcal{B}_1}, \quad f\in \BMO(\RR^d,\gamma).
\end{equation*}
Thus, $(\BMO(\RR^d,\gamma),\|\cdot\|_{\BMO(\RR^d,\gamma)})$ is a Banach space.

In~\cite[Proposition 2.4]{MM} it was proved that if we define the space  $\BMO(\RR^d,\gamma)$ associated to the family $\mathcal{B}_a$ with $a\neq 1$ instead of $\mathcal{B}_1$ we obtain again the same space and the corresponding norms are equivalent.

If $Q$ is a cube in $\RR^d$ we denote by $c_Q$ and $\ell_Q$ the center and the side length of $Q$ respectively. The family $\mathcal{Q}_a$ consists of all those cubes $Q\subset \RR^d$ such that $\ell_Q\leq am(c_Q)$.
If we consider the family $\mathcal{Q}_a$ instead of $\mathcal{B}_1$ to define $\BMO(\RR^d,\gamma)$ the new space coincides with that defined using $\mathcal{B}_1$ and the corresponding norms are equivalent.

The main properties of the space $\BMO(\RR^d,\gamma)$ were established in~\cite{MM} (see also~\cite{CMM},~\cite{LY} and~\cite{WY}).

Let $a>0$ and $1<p<\infty$. A function $f\in L^1(\RR^d,\gamma)$ is said to be in $\JN_p^{\mathcal{Q}_a}(\RR^d,\gamma)$ when 
\begin{equation*}
    K_p^{\mathcal{Q}_a} (f) = \sup \left(
    \sum_i \gamma(Q_i) \left( 
    \frac{1}{\gamma(Q_i)} \int_{Q_i} |f-f_{Q_i}|d\gamma
    \right)^p \right)^{1/p} <\infty,
\end{equation*}
where the supremum is taken over all the countable collections $\{Q_i\}_{i\in\NN}$ of pairwise disjoint cubes in $\mathcal{Q}_a$. The space $\JN_p^{\mathcal{Q}_a}(\RR^d,\gamma)$ is endowed with the norm 
\begin{equation*}
    \|f\|_{\JN_p^{\mathcal{Q}_a}(\RR^d,\gamma)} = 
    \|f\|_{L^1(\RR^d,\gamma)} + K_p^{\mathcal{Q}_a}(f), \quad
    f\in \JN_p^{\mathcal{Q}_a}(\RR^d,\gamma).
\end{equation*}

We will prove in Proposition~\ref{prop: prop-2.1} that $\JN_p^{\mathcal{Q}_a}(\RR^d,\gamma)$ actually does not depend on $a>0$. Then we will write in the sequel $\JN_p(\RR^d,\gamma)$ to name $\JN_p^{\mathcal{Q}_a}(\RR^d,\gamma)$, $a>0$. We also prove that $\BMO(\RR^d,\gamma)$ is contained in $\JN_p(\RR^d,\gamma)$, $1<p<\infty$, and appears when $p\rightarrow\infty$ in $\JN_p(\RR^d,\gamma)$ (see Proposition~\ref{prop: prop-2.2}).

The following property is a John-Nirenberg type inequality for $\JN_p(\RR^d,\gamma)$.

\begin{thm}\label{thm: teo-1.1}
    Let $a>0$ and $1<p<\infty$. There exists $C>0$ such that, for every $Q\in \mathcal{Q}_a$, $\sigma>0$ and $f\in \JN_p(\RR^d,\gamma)$,
    \begin{equation*}
        \gamma\left( \{x\in Q: |f-f_Q|>\sigma\}\right) 
        \leq C \left(\frac{K_p^{\mathcal{Q}_a}(f)}{\sigma}\right)^p.
    \end{equation*}
\end{thm}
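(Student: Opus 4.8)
The plan is to reduce the statement to the classical John–Nirenberg inequality for the space $\JN_p$ on cubes (equipped with Lebesgue measure), applied on a fixed admissible cube $Q \in \mathcal{Q}_a$, by comparing the Gaussian measure $\gamma$ with a suitable multiple of Lebesgue measure on $Q$. The key observation is that on an admissible cube $Q \in \mathcal{Q}_a$ one has $\ell_Q \leq a\, m(c_Q)$, which forces $|x - c_Q|$ to stay comparable on $Q$, so that $e^{-|x|^2}$ oscillates by at most a constant factor (depending only on $a$ and $d$) across $Q$; consequently there is a constant $\kappa = \kappa(x_Q) > 0$ such that $\kappa^{-1} \leq \frac{d\gamma}{dx}\big|_Q \cdot \kappa \leq \kappa$ in the sense that $d\gamma = w(x)\,dx$ with $w$ satisfying $c_1(a,d)\,w(c_Q) \le w(x) \le c_2(a,d)\, w(c_Q)$ for all $x \in Q$. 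This is exactly the local doubling property of $\gamma$ on $\mathcal{B}_a$ from~\cite[Proposition 2.1]{MM}, reformulated pointwise.

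First I would fix $Q \in \mathcal{Q}_a$ and $f \in \JN_p(\RR^d,\gamma)$, and set $\mu_Q$ to be Lebesgue measure restricted to $Q$ (or equivalently the normalized Lebesgue probability measure on $Q$). Using the comparability $w(x) \sim_{a,d} w(c_Q)$ on $Q$, I would show that for every subcube $R \subseteq Q$ (which is automatically in $\mathcal{Q}_a$, since shrinking a cube only decreases its side length while $m(c_R) \ge c\, m(c_Q)$ by the slow variation of $m$ — here I may need to replace $a$ by a larger parameter $a'$ and invoke Proposition~\ref{prop: prop-2.1} to absorb this), the Gaussian average $f_R = \frac{1}{\gamma(R)}\int_R f\, d\gamma$ and the Lebesgue average $\langle f\rangle_R = \frac{1}{|R|}\int_R f\, dx$ satisfy
\begin{equation*}
    \frac{1}{|R|}\int_R |f - \langle f\rangle_R|\, dx \;\le\; C(a,d)\,\frac{1}{\gamma(R)}\int_R |f - f_R|\, d\gamma,
\end{equation*}
and likewise $\gamma(R) \sim_{a,d} w(c_Q)\,|R|$ for every subcube $R$ of $Q$. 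Combining these, for any countable family of pairwise disjoint subcubes $\{R_i\}$ of $Q$,
\begin{equation*}
    \sum_i |R_i|\left(\frac{1}{|R_i|}\int_{R_i}|f - \langle f\rangle_{R_i}|\,dx\right)^p \;\le\; \frac{C(a,d)^p}{w(c_Q)}\sum_i \gamma(R_i)\left(\frac{1}{\gamma(R_i)}\int_{R_i}|f - f_{R_i}|\,d\gamma\right)^p \;\le\; \frac{C(a,d)^p}{w(c_Q)}\,\big(K_p^{\mathcal{Q}_{a'}}(f)\big)^p.
\end{equation*}
Taking the supremum over such families shows $f|_Q \in \JN_p(Q)$ (classical, Lebesgue) with $\|f\|_{\JN_p(Q)}^p \le C(a,d)^p\, w(c_Q)^{-1}\,\big(K_p^{\mathcal{Q}_{a'}}(f)\big)^p$, and by Proposition~\ref{prop: prop-2.1} the right-hand side is controlled by $C\, w(c_Q)^{-1}\,K_p^{\mathcal{Q}_a}(f)^p$.

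Then I would apply the classical John–Nirenberg inequality for $\JN_p$ on the single cube $Q$ (this is the content of, e.g.,~\cite[Theorem 1.2]{McP} or the weak-type bound $\JN_p(Q) \subset L^{p,\infty}(Q)$ recorded in the introduction): there is an absolute constant $c_0 = c_0(d,p)$ with
\begin{equation*}
    |\{x \in Q : |f - \langle f\rangle_Q| > \sigma\}| \;\le\; c_0\left(\frac{\|f\|_{\JN_p(Q)}}{\sigma}\right)^p \qquad \text{for all } \sigma > 0.
\end{equation*}
Multiplying through by $w(c_Q) \sim_{a,d} \gamma(Q)/|Q|$ and using $\gamma(E) \sim_{a,d} w(c_Q)\,|E|$ for measurable $E \subseteq Q$, together with the triangle inequality to pass from $\langle f\rangle_Q$ to $f_Q$ (the discrepancy $|\langle f\rangle_Q - f_Q|$ being absorbed into a comparable change of $\sigma$, at the cost of enlarging $C$ and using that a single cube contributes $\gamma(Q)(\gamma(Q)^{-1}\int_Q |f-f_Q|\,d\gamma)^p \le K_p^{\mathcal{Q}_a}(f)^p$), yields exactly
\begin{equation*}
    \gamma\big(\{x \in Q : |f - f_Q| > \sigma\}\big) \;\le\; C\left(\frac{K_p^{\mathcal{Q}_a}(f)}{\sigma}\right)^p.
\end{equation*}

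The main obstacle I anticipate is bookkeeping the admissibility parameter: subcubes of an admissible cube are admissible only after possibly enlarging $a$ (because $m(c_R)$ for a subcube $R$ of $Q$ may be slightly larger or smaller than $m(c_Q)$, but crucially stays comparable by the inequality $\frac{1}{C} m(c_Q) \le m(x) \le C\, m(c_Q)$ valid for $x\in Q$ when $Q\in\mathcal{Q}_a$ — a standard fact for this $m$), so one has to be careful that all the cube families used in estimating $\|f\|_{\JN_p(Q)}$ genuinely lie in some $\mathcal{Q}_{a'}$ and then appeal to the $a$-independence from Proposition~\ref{prop: prop-2.1}. A secondary technical point is that the classical John–Nirenberg inequality on a single cube is usually stated for the Lebesgue $\JN_p$ norm over subcubes of $Q$, which is precisely what the comparison above produces; no self-improvement beyond the elementary $\JN_p(Q)\hookrightarrow L^{p,\infty}(Q)$ embedding is needed, since the theorem only asserts a weak-type bound with the same exponent $p$. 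All the constants produced depend only on $a$, $d$, and $p$, as required.
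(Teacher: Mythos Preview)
Your proposal is correct and follows essentially the same approach as the paper's proof in Section~\ref{sec:JN-ineq}: compare $\gamma$ with Lebesgue measure on admissible cubes via \cite[Proposition~2.1]{MM}, observe that subcubes of $Q\in\mathcal{Q}_a$ lie in some $\mathcal{Q}_{a'}$ (the paper computes $a'=a(1+a\sqrt{d})$ explicitly), apply the classical Lebesgue John--Nirenberg inequality \cite[Lemma~3]{JN} on $Q$, and then absorb the discrepancy $|f_Q-\langle f\rangle_Q|$ together with the parameter change via Proposition~\ref{prop: prop-2.1}. The bookkeeping concerns you flag are exactly the ones the paper addresses, and in the same way.
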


It is a celebrated result due to Fefferman and Stein (\cite{FS}) that the Hardy space $H^1(\RR^d)$ is the predual of $\BMO(\RR^d)$. In the Gaussian setting, the predual of the space $\BMO(\RR^d,\gamma)$ was characterized in~\cite[Theorem 5.2]{MM} as a Hardy type space $H^1(\RR^d,\gamma)$ defined by using atoms whose support is contained in admissible balls. In~\cite[\S 6]{DHKY} it was defined a Hardy type space $H^{p'}(Q)$ whose dual coincide with $\JN_p(Q)$, where $p'=\frac{p}{p-1}$ and $1<p<\infty$. The ideas in~\cite{DHKY} inspired the duality properties for John-Nirenberg-Campanato spaces~(\cite{SXY}).

Our main result characterizes a new Hardy type space as the dual of $\JN_p(\RR^d,\gamma)$, $1<p<\infty$. 

For every $1\leq s \leq \infty$ and every cube in $\RR^d$ we denote by $L^s_0(Q,\gamma)$ the space consisting of all those $f\in L^s(Q,\gamma)$ such that $\int_Q fd\gamma = 0$. For $1<q\leq \infty$ and $a>0$ we say that a function $b \in \mathcal{A}(q,a, Q)$ if $b$ is supported on a cube $Q\in \mathcal{Q}_a$ and $b\in L_0^q(Q,\gamma)$.

Let $a>0$ and $1<p<q\leq \infty$. We consider a measurable function $g$ on $\RR^d$ defined by $g = \sum_{j=1}^\infty b_j$, where, for every $ j \in \NN$, $b_j\in \mathcal{A}(q,a,Q_j)$ being $Q_j\in \mathcal{Q}_a$ and the sequence $\{Q_j\}_{j=1}^\infty$ is pairwise disjoint. We say that $g$ is a \textbf{$(p,q,a)$-polymer} when
\begin{equation}\label{ecu: eq-polymer}
    \sum_{j=1}^\infty \gamma(Q_j) \left( \frac{1}{\gamma(Q_j)} \int_{Q_j} |b_j|^q d\gamma\right)^{p/q} < \infty.
\end{equation}
Note that the series defining $g$ is pointwise convergent because $\{Q_j\}_{j=1}^\infty$ is pairwise disjoint. We also define 
\begin{equation*}
    \| g\|_{(p,q,a)} = \inf \left(
    \sum_{j=1}^{\infty} \gamma(Q_j) \left( \frac{1}{\gamma(Q_j)} \int_{Q_j} |b_j|^q d\gamma\right)^{p/q} \right)^{1/p}
\end{equation*}
where the infimum is taken over all the sequences $\{b_j\}_{j=1}^{\infty}$ as above such that ${g = \sum_{j=1}^{\infty} b_j}$ and~\eqref{ecu: eq-polymer} holds. 
By using Jensen inequality we can see that if $g$ is a $(p,q,a)$-polymer defined as above, then $g \in L^p(\RR^d,\gamma)$ and 
\begin{equation}\label{ecu: eq-norma-p-polymer}
    \|g\|_{L^p(\RR^d,\gamma)} \leq \left(
    \sum_{j=1}^{\infty} \gamma(Q_j) \left( \frac{1}{\gamma(Q_j)} \int_{Q_j} |b_j|^q d\gamma\right)^{p/q} \right)^{1/p}.
\end{equation}
Observe that $g=0$ a.e. provided that $\|g\|_{(p,q,a)}=0$. The above estimate implies that if $\{g_i\}_{i\in \NN}$ is a sequence of $(p,q,a)$-polymers such that $\sum_{i=1}^\infty\|g_i\|_{(p,q,q)}<\infty $, then the series $\sum_{i=1}^\infty g_i$ converges in $L^p(\RR^d,\gamma)$.

When $q=\infty$ the above expressions are understood in the usual way.

We now introduce a Hardy type space as follows. A measurable function $g$ is in $H_{p,q,a}(\RR^d,\gamma)$ when $g=c_0+ \sum_{i=1}^\infty g_i$, where $c_0\in \CC$, $g_i$ is a $(p,q,a)$-polymer for every $i\in \NN$ and $\sum_{i=1}^\infty \|g_i\|_{(p,q,a)}<\infty$. The convergence of the series is understood in $L^p (\RR^d,\gamma)$. Note that if $g\in H_{p,q,a}(\RR^d,\gamma)$ then $g\in L^p(\mathbb{R}^d,\gamma)$.  Observe that $c_0$ is actually unique, since each polymer $g_i$ can be written in terms of functions $b_{ij}\in \mathcal{A}(q,a,Q_{ij})$ and all of them have zero integral with respect to the Gaussian measure.

We define the following quantity
\begin{equation*}
    \|g\|_{H_{p,q,a}(\RR^d,\gamma)} = |c_0|+\inf \sum_{i=1}^\infty \|g_i\|_{(p,q,a)},
\end{equation*}
where the infimum is taken over all the sequences $\{g_i\}_{i=1}^\infty$ of $(p,q,a)$-polymers such that $g =c_0+\sum_{i=1}^\infty g_i $ with $c_0=\int_{\RR^d} g d\gamma$ and $\sum_{i=1}^\infty \|g_i\|_{(p,q,a)}<\infty$. The functional $\|\cdot\|_{H_{p,q,a}(\RR^d,\gamma)}$ is a norm for $H_{p,q,a}(\RR^d,\gamma)$.


 Given $f\in \JN_p(\RR^d, \gamma)$, we define the functional $\Lambda_f$ by
\begin{equation}\label{ecu: eq-def-funcional}
    \Lambda_f g :=\lim_{N\to \infty} \int_{\mathbb R^d}f_N g d\gamma,
\end{equation}
where for every $N\in \NN$,
\[f_N(x)=\begin{dcases*}
    f(x), & if $|f(x)|\leq N$\\ N\operatorname{sgn}(f(x)), & if $|f(x)|>N$. 
\end{dcases*}\]

The functional  $\Lambda_f$ is well-defined, as we shall see in the proof of Theorem~\ref{thm: teo-1.2}\ref{itm: teo-1.2-a}.

\begin{thm}\label{thm: teo-1.2}
    Let $1< q <p < \infty$ and $a>0$.
    \begin{enumerate}
        \item \label{itm: teo-1.2-a} Let $f \in \JN_p(\RR^d,\gamma)$. Then $\Lambda_f\in (H_{p',q',a}(\RR^d,\gamma))'$ and 
        \begin{equation*}
            \|\Lambda_f\|_{(H_{p',q',a}(\RR^d,\gamma))'}
            \leq C \|f\|_{ \JN^{\mathcal{Q}_a}_p(\RR^d,\gamma)},
        \end{equation*}
        where $C>0$ does not depend on $f$.
        \item \label{itm: teo-1.2-b} If $\Lambda \in (H_{p',q',a}(\RR^d,\gamma))'$ there exists a unique $f\in \JN_p(\RR^d,\gamma)$ such that $\Lambda = \Lambda_f$, defined as in~\eqref{ecu: eq-def-funcional}, and
        \begin{equation*}
            \|f\|_{ \JN^{\mathcal{Q}_a}_p(\RR^d,\gamma)}
            \leq C \|\Lambda\|_{(H_{p',q',a}(\RR^d,\gamma))'}
        \end{equation*}
        where $C>0$ does not depend on $\Lambda$.
    \end{enumerate}
\end{thm}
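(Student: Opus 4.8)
The plan is to establish the duality $(H_{p',q',a}(\RR^d,\gamma))' = \JN_p(\RR^d,\gamma)$ by the standard two-sided argument, exploiting the fact that $H_{p',q',a}$ is, by construction, a ``molecular'' space built from polymers, and that $\JN_p$ is defined by a supremum over disjoint families of admissible cubes --- the same index set that parametrizes polymers. Throughout, I will freely use Proposition~\ref{prop: prop-2.1} to fix a convenient value of $a$, the John--Nirenberg inequality of Theorem~\ref{thm: teo-1.1} (to upgrade mean oscillation control to $L^s$ control on admissible cubes for any $s<\infty$), and H\"older's inequality with the conjugate pairs $(p,p')$ and $(q,q')$, noting $q'<p'$ since $q<p$.

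\textbf{Part (a): $f\in\JN_p \Rightarrow \Lambda_f\in (H_{p',q',a})'$.} First I would check that $\Lambda_f$ is well defined. Given $g\in H_{p',q',a}$, write $g = c_0 + \sum_i g_i$ with $g_i$ a $(p',q',a)$-polymer, $g_i=\sum_j b_{ij}$ with $b_{ij}\in\mathcal{A}(q',a,Q_{ij})$ and $\{Q_{ij}\}_j$ pairwise disjoint. Since $c_0=\int_{\RR^d} g\,d\gamma$ and each $\int f_N\,d\gamma$ makes sense ($f_N$ bounded, $\gamma$ finite), the term $c_0\int f_N\,d\gamma\to c_0\int f\,d\gamma$ by dominated convergence (using $f\in L^1(\RR^d,\gamma)$). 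For each polymer $g_i$, using $\int_{Q_{ij}} b_{ij}\,d\gamma=0$ we may replace $f_N$ by $f_N-(f_N)_{Q_{ij}}$ on $Q_{ij}$; then H\"older in the exponents $(p,p')$ on the sum over $j$ and in $(q,q')$ on each cube gives
\begin{equation*}
    \left| \int_{\RR^d} f_N g_i\,d\gamma \right|
    \leq \sum_j \gamma(Q_{ij}) \left(\frac{1}{\gamma(Q_{ij})}\int_{Q_{ij}} |f_N-(f_N)_{Q_{ij}}|^{q}\,d\gamma\right)^{1/q}
    \left(\frac{1}{\gamma(Q_{ij})}\int_{Q_{ij}} |b_{ij}|^{q'}\,d\gamma\right)^{1/q'},
\end{equation*}
and then H\"older again on the $j$-sum with exponents $p/q$ and $p'/q'$ (valid as $q<p$) bounds this by the product of $\|g_i\|_{(p',q',a)}$ and $\big(\sum_j\gamma(Q_{ij})(\tfrac{1}{\gamma(Q_{ij})}\int_{Q_{ij}}|f_N-(f_N)_{Q_{ij}}|^q d\gamma)^{p/q}\big)^{1/p}$. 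The key point is that the truncation $f_N$ does not increase the $\JN_p$-type quantity on the right by more than a uniform constant: here Theorem~\ref{thm: teo-1.1} enters, since it allows bounding the $L^q$ oscillation of $f$ (hence of $f_N$, up to constants, using that truncation is $1$-Lipschitz and commutes well with constants) on each admissible cube by a constant times $K_p^{\mathcal{Q}_a}(f)\gamma(Q_{ij})^{1/p-1/q}$ after rearranging the distributional estimate; summing over $j$ recovers $\lesssim K_p^{\mathcal{Q}_a}(f)$. Summing over $i$ and taking the infimum over representations of $g$ yields $|\Lambda_f g|\leq C\|f\|_{\JN_p}\|g\|_{H_{p',q',a}}$; the same estimate with $f_N-f_M$ in place of $f_N$ and a Cauchy argument shows the limit in~\eqref{ecu: eq-def-funcional} exists.

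\textbf{Part (b): $\Lambda\in (H_{p',q',a})' \Rightarrow \Lambda=\Lambda_f$ for some $f\in\JN_p$.} For a fixed admissible cube $Q\in\mathcal{Q}_a$, the map $b\mapsto \Lambda(b)$ on $L^{q'}_0(Q,\gamma)$ is bounded: a single function $b\in\mathcal{A}(q',a,Q)$ is a one-term polymer with $\|b\|_{(p',q',a)}=\gamma(Q)^{1/p'-1/q'}\|b\|_{L^{q'}(Q,\gamma)}$, so $|\Lambda(b)|\le\|\Lambda\|\,\gamma(Q)^{1/p'-1/q'}\|b\|_{L^{q'}(Q,\gamma)}$. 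By duality of $L^{q'}_0(Q,\gamma)$ with $L^q(Q,\gamma)/\mathbb{C}$, there is $f^Q\in L^q(Q,\gamma)$, unique up to additive constants, with $\Lambda(b)=\int_Q f^Q b\,d\gamma$ for all $b\in L^{q'}_0(Q,\gamma)$. Running this on an exhausting sequence of admissible cubes (e.g. $Q$ replaced by cubes $Q_k\uparrow\RR^d$ that are admissible after rescaling $a$, permissible by Proposition~\ref{prop: prop-2.1}) and checking compatibility on overlaps gives a single $f\in L^1_{\loc}$, normalized by $\int_{\RR^d} f\,d\gamma = \Lambda(\mathbf{1})$ or by an appropriate limiting constant, such that $\Lambda$ agrees with $\int f(\cdot)\,d\gamma$ on every $b\in\mathcal{A}(q',a,Q)$, $Q\in\mathcal{Q}_a$. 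To see $f\in\JN_p$ with $K_p^{\mathcal{Q}_a}(f)\le C\|\Lambda\|$: fix a pairwise disjoint family $\{Q_j\}\subset\mathcal{Q}_a$; for each $j$ choose (by $L^q$--$L^{q'}$ duality on $Q_j$, applied to $f-f_{Q_j}$) a function $b_j\in L^{q'}_0(Q_j,\gamma)$ with $\|b_j\|_{L^{q'}(Q_j,\gamma)}\le1$ and $\int_{Q_j}(f-f_{Q_j})b_j\,d\gamma \geq \tfrac12\big(\int_{Q_j}|f-f_{Q_j}|^q d\gamma\big)^{1/q}$ --- wait, more precisely choose $b_j$ dual to $|f-f_{Q_j}|^{q-1}\mathrm{sgn}(f-f_{Q_j})$ normalized appropriately --- and scale by coefficients $\lambda_j\ge0$ to be chosen; then $g:=\sum_j\lambda_j b_j$ is a $(p',q',a)$-polymer, $\Lambda(g)=\sum_j\lambda_j\int_{Q_j}(f-f_{Q_j})b_j\,d\gamma$, and optimizing the $\lambda_j$ subject to $\|g\|_{(p',q',a)}\le1$ (a finite-dimensional H\"older extremal problem) produces exactly $\big(\sum_j\gamma(Q_j)(\tfrac{1}{\gamma(Q_j)}\int_{Q_j}|f-f_{Q_j}|\,d\gamma)^p\big)^{1/p}\le C\|\Lambda\|$; taking the supremum over families gives $K_p^{\mathcal{Q}_a}(f)\le C\|\Lambda\|$. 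Finally, $f\in L^1(\RR^d,\gamma)$ because $\JN_p$ control plus the normalization constant $c_0$ force integrability (using $\gamma(\RR^d)<\infty$ and the $L^{p,\infty}$-type consequence of Theorem~\ref{thm: teo-1.1}), and then $\Lambda=\Lambda_f$ on all of $H_{p',q',a}$ follows since both are continuous functionals agreeing on the dense subspace spanned by polymers and constants, with the truncation in~\eqref{ecu: eq-def-funcional} reconciled via dominated convergence as in Part (a). Uniqueness of $f$: if $\Lambda_f\equiv0$ then $\int_Q f b\,d\gamma=0$ for all $b\in L^{q'}_0(Q,\gamma)$ and all admissible $Q$, forcing $f$ constant on each such $Q$ and then $f\equiv c_0$; but $c_0=\int f\,d\gamma=\Lambda(\mathbf 1)=0$, so $f=0$.

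\textbf{Main obstacle.} The delicate point is Part (b): constructing a \emph{single} globally defined $f$ from the local Riesz representatives $f^Q$ in a way that is consistent across the non-doubling Gaussian geometry, and handling the additive-constant ambiguity so that exactly the normalization $c_0=\int_{\RR^d}f\,d\gamma$ is pinned down --- this is where the structure of $H_{p,q,a}$ as ``$\mathbb{C}\oplus$ (closure of polymers)'' and the finiteness of $\gamma$ are essential. A secondary technical nuisance, present already in Part (a), is the interaction between the truncation $f_N$ in~\eqref{ecu: eq-def-funcional} and the oscillation estimates: one must ensure the John--Nirenberg inequality of Theorem~\ref{thm: teo-1.1} is applied to control $\|f_N - (f_N)_Q\|_{L^q(Q,\gamma)}$ uniformly in $N$ by $C\,K_p^{\mathcal{Q}_a}(f)\,\gamma(Q)^{1/p}$ on admissible cubes, which requires the elementary observation that $|f_N - (f_N)_Q| \le$ a fixed multiple of the truncation of $|f - f_Q|$ plus lower-order terms controllable by the same distributional bound.
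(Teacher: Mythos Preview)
Your Part~(a) is essentially the paper's argument: H\"older in $(q,q')$ on each cube, then in $(p,p')$ on the sum, together with the fact that truncation does not increase oscillation (the paper records this as $\int_Q|f_N-(f_N)_Q|^q\,d\gamma\le C\int_Q|f-f_Q|^q\,d\gamma$ and invokes Proposition~\ref{prop: prop-4.1} rather than reproving the $L^q$ oscillation bound from Theorem~\ref{thm: teo-1.1}, but the content is the same).

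Part~(b) has a genuine gap in the step ``$f\in L^1(\RR^d,\gamma)$''. Your justification --- ``$\JN_p$ control plus the normalization constant $c_0$ force integrability, using $\gamma(\RR^d)<\infty$ and the $L^{p,\infty}$-type consequence of Theorem~\ref{thm: teo-1.1}'' --- does not work. Theorem~\ref{thm: teo-1.1} is purely local: it bounds $\|f-f_Q\|_{L^{p,\infty}(Q,\gamma)}$ for $Q\in\mathcal Q_a$, and the resulting $L^1(Q,\gamma)$ bound carries the factor $\gamma(Q)^{1/q'}$. Summed over any infinite covering of $\RR^d$ by admissible cubes, $\sum_n\gamma(Q_n)^{1/q'}$ need not be finite just because $\sum_n\gamma(Q_n)<\infty$. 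Your alternative, an exhaustion $Q_k\uparrow\RR^d$ by cubes admissible for parameters $a_k\to\infty$, runs into the same wall: the equivalence constants between $H_{p',q',a}$ and $H_{p',q',a_k}$ (Proposition~\ref{prop: prop-4.3}) and between $K_p^{\mathcal Q_a}$ and $K_p^{\mathcal Q_{a_k}}$ (Proposition~\ref{prop: prop-2.1}) blow up with $a_k$, so neither $\|f^{Q_k}\|_{L^q(Q_k,\gamma)/\CC}$ nor $\|f-f_{Q_k}\|_{L^1(Q_k,\gamma)}$ is uniformly controlled.

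What the paper actually does here is substantial and specific to the Gaussian geometry. It builds (Lemma~\ref{lem: cubrimiento}) an explicit covering $\{Q_n\}$ of $\RR^d$ by cubes in a \emph{fixed} $\mathcal Q_{A_d}$, organized in layers $L_k$ with $\#L_k\lesssim k^{d-1}$ and $\gamma(Q)\lesssim e^{-ck}$ for $Q\in L_k$; this is precisely what makes $\sum_n\gamma(Q_n)^{1/q'}<\infty$. The additive constants $\alpha_Q$ relating the global $h^\Lambda$ to the local Riesz representatives are then controlled by a chain-of-balls construction toward the origin (Lemma~\ref{lem: M(B)}, following \cite[p.~302]{MM}), giving $|\alpha_Q|\le C\,K_Q\,\|\Lambda\|$ with $K_Q\lesssim k$ on the $k$-th layer, so that $\sum_n|\alpha_{Q_n}|\gamma(Q_n)<\infty$ as well. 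Without something of this kind, the passage from local $L^q$ representatives to a globally integrable $f$ is not justified; this is the missing idea in your outline.
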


Note that from Theorem~\ref{thm: teo-1.2} and \cite[Lemma 4.14]{JY} we can deduce that $\JN_p(\RR^d,\gamma)$ is a Banach space for every $1<p<\infty$.  Moreover, as we shall see in Proposition~\ref{prop: prop-4.3}, $H_{p,q,a_1}(\RR^d,\gamma)=H_{p,q,a_2}(\RR^d,\gamma)$ whenever $a_1,a_2>0$ and $1<p< q<\infty$.

\section{Some properties of the \texorpdfstring{$\JN_p(\RR^d,\gamma)$}{JNp(Rd,gamma)} spaces}

We first prove that the space $\JN_p^{\mathcal{Q}_a}(\RR^d,\gamma)$ does not depend on $a>0$.

\begin{prop}\label{prop: prop-2.1}
    Let $1<p<\infty$ and $a_1,a_2>0$. We have that
    \[\JN_p^{\mathcal{Q}_{a_1}}(\RR^d,\gamma)=\JN_p^{\mathcal{Q}_{a_2}}(\RR^d,\gamma)\]
    algebraically and topologically.
\end{prop}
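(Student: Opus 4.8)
The plan is to show the two norms $\|\cdot\|_{\JN_p^{\mathcal{Q}_{a_1}}}$ and $\|\cdot\|_{\JN_p^{\mathcal{Q}_{a_2}}}$ are comparable; by symmetry it suffices to take $a_1 < a_2$ and prove $K_p^{\mathcal{Q}_{a_2}}(f) \leq C\, K_p^{\mathcal{Q}_{a_1}}(f)$ for every $f \in L^1(\RR^d,\gamma)$, with $C = C(a_1,a_2,d,p)$, since then each space is contained in the other with comparable norms (the $L^1(\RR^d,\gamma)$ part of the norm is common to both). The one inclusion $\mathcal{Q}_{a_1}\subset\mathcal{Q}_{a_2}$ is trivial and gives $K_p^{\mathcal{Q}_{a_1}}(f)\leq K_p^{\mathcal{Q}_{a_2}}(f)$ for free.

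The key step is a covering/subdivision argument: given a cube $Q \in \mathcal{Q}_{a_2}$, I want to partition it (up to a null set) into a controlled number of subcubes $\{R_k\}$, each belonging to $\mathcal{Q}_{a_1}$, so that $\gamma(Q) \approx \sum_k \gamma(R_k)$ and the local oscillation $\frac1{\gamma(Q)}\int_Q|f-f_Q|\,d\gamma$ is dominated by a bounded combination of the local oscillations $\frac1{\gamma(R_k)}\int_{R_k}|f-f_{R_k}|\,d\gamma$. The natural move is to bisect each side of $Q$ dyadically: choosing $\ell \in \NN$ with $2^{-\ell} \leq a_1/a_2$ and splitting $Q$ into $2^{\ell d}$ congruent subcubes $R_k$ of side $2^{-\ell}\ell_Q$. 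For each such $R_k$ one has $\ell_{R_k} = 2^{-\ell}\ell_Q \leq 2^{-\ell} a_2 m(c_Q)$, and since $c_{R_k}$ and $c_Q$ are close (within $\sqrt d\,\ell_Q$) while $m$ is comparable on admissible-scale neighbourhoods — this is exactly the kind of estimate $m(c_Q)\leq C m(c_{R_k})$ that holds because $m$ has Lipschitz-type behaviour and $\ell_Q\lesssim m(c_Q)$ — one gets $\ell_{R_k} \leq a_1 m(c_{R_k})$, i.e. $R_k \in \mathcal{Q}_{a_1}$, possibly after enlarging $\ell$ by a bounded amount depending only on $d$. Also the Gaussian measure is doubling on admissible cubes (\cite[Proposition 2.1]{MM} applied to balls, transferred to cubes as the paper already notes), so $\gamma(R_k) \approx \gamma(Q)$ with constants depending only on $a_2,d$. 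The oscillation bound then follows from the standard pointwise inequality $|f-f_Q| \leq |f - f_{R_k}| + |f_{R_k} - f_Q|$ on $R_k$, together with $|f_{R_k}-f_Q| \leq \frac{1}{\gamma(R_k)}\int_{R_k}|f-f_Q|\,d\gamma$, yielding
\[
\frac{1}{\gamma(Q)}\int_Q |f-f_Q|\,d\gamma \;\leq\; C \sum_{k} \frac{1}{\gamma(R_k)}\int_{R_k}|f-f_{R_k}|\,d\gamma,
\]
with $C$ depending only on $a_2,d$ (the number of pieces $2^{\ell d}$ and the doubling constant).

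With this in hand, the reduction is routine. Given a countable pairwise disjoint family $\{Q_i\}\subset\mathcal{Q}_{a_2}$, replace each $Q_i$ by its subcubes $\{R_{i,k}\}_{k=1}^{2^{\ell d}}\subset\mathcal{Q}_{a_1}$; the whole collection $\{R_{i,k}\}_{i,k}$ is still pairwise disjoint (interiors of the $R_{i,k}$ are disjoint inside each $Q_i$, and the $Q_i$ themselves are disjoint). Then, using the oscillation bound, the power-mean/Jensen inequality $\big(\sum_{k=1}^{M} t_k\big)^p \leq M^{p-1}\sum_{k=1}^M t_k^p$ to distribute the $p$-th power over the $\leq 2^{\ell d}$ pieces, and $\gamma(Q_i)\approx\gamma(R_{i,k})$, one estimates
\[
\sum_i \gamma(Q_i)\Big(\tfrac{1}{\gamma(Q_i)}\!\int_{Q_i}\!|f-f_{Q_i}|\,d\gamma\Big)^{\!p}
\;\leq\; C \sum_{i}\sum_{k} \gamma(R_{i,k})\Big(\tfrac{1}{\gamma(R_{i,k})}\!\int_{R_{i,k}}\!|f-f_{R_{i,k}}|\,d\gamma\Big)^{\!p}
\;\leq\; C\, K_p^{\mathcal{Q}_{a_1}}(f)^p.
\]
Taking the supremum over all such families $\{Q_i\}$ gives $K_p^{\mathcal{Q}_{a_2}}(f) \leq C\, K_p^{\mathcal{Q}_{a_1}}(f)$, which completes the proof.

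The main obstacle is the geometric lemma comparing $m(c_Q)$ with $m(c_{R_k})$ — i.e. verifying that bisecting an $\mathcal{Q}_{a_2}$-cube finitely many times lands inside $\mathcal{Q}_{a_1}$ — which requires care with the definition of $m$ near the origin and the observation that on a cube $Q\in\mathcal{Q}_a$ the diameter $\sqrt d\,\ell_Q$ is itself $\lesssim a\,m(c_Q)$, forcing $m$ to be comparable across $Q$. Everything else (disjointness bookkeeping, Jensen, Gaussian doubling on admissible cubes) is standard; I would also quietly use that it suffices to subdivide only those $Q_i$ not already in $\mathcal{Q}_{a_1}$, though subdividing all of them does no harm.
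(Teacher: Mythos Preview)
Your overall strategy matches the paper's: reduce the nontrivial inclusion to a local oscillation inequality on a single cube $Q\in\mathcal{Q}_{a_2}$, decompose $Q$ into a bounded number of subcubes in $\mathcal{Q}_{a_1}$, and then sum. The geometric claim that $m(c_Q)\leq C\,m(c_{R_k})$ (hence that sufficiently many dyadic bisections land the pieces in $\mathcal{Q}_{a_1}$) is correct and easy to check from the definition of $m$.

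The gap is in the oscillation step. Your derivation of
\[
\frac{1}{\gamma(Q)}\int_Q |f-f_Q|\,d\gamma \;\leq\; C \sum_{k} \frac{1}{\gamma(R_k)}\int_{R_k}|f-f_{R_k}|\,d\gamma
\]
from $|f-f_Q|\leq |f-f_{R_k}|+|f_{R_k}-f_Q|$ and $|f_{R_k}-f_Q|\leq \gamma(R_k)^{-1}\int_{R_k}|f-f_Q|\,d\gamma$ is circular: summing the second term over $k$ just reproduces $\int_Q|f-f_Q|\,d\gamma$ on the right-hand side. Worse, the inequality itself is \emph{false} for a disjoint partition. Take $Q$ split into two halves $R_1,R_2$ of equal $\gamma$-measure and set $f=\chi_{R_1}-\chi_{R_2}$; then $f_{R_k}=\pm 1$, so every term on the right vanishes, while the left-hand side equals $1$. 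Thus disjoint dyadic subcubes cannot yield the oscillation bound you need.

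The paper avoids this by invoking the decomposition of \cite[Proposition~2.3]{MM} (see also the explicit construction in the proof of Proposition~\ref{prop: prop-4.3}), which produces \emph{overlapping} subcubes $P_1,\dots,P_N\in\mathcal{Q}_{a_2}$ of $Q$, all containing a common central cube $P_0$ with $\gamma(P_0)\simeq\gamma(P_i)\simeq\gamma(Q)$. The overlap through $P_0$ is exactly what lets one chain $|f_{P_i}-f_{P_j}|\lesssim \sum_\ell \gamma(P_\ell)^{-1}\int_{P_\ell}|f-f_{P_\ell}|\,d\gamma$ and obtain the oscillation inequality. The price is that the full collection $\{P_{i,j}\}_{i,j}$ is no longer pairwise disjoint; instead one fixes the ``position'' index $j\in\{1,\dots,N\}$, observes that $\{P_{i,j}\}_i$ \emph{is} disjoint because $P_{i,j}\subset Q_i$, bounds each such family by $K_p^{\mathcal{Q}_{a_2}}(f)^p$, and pays a harmless factor $N$. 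Replacing your dyadic pieces by this overlapping family fixes the argument with essentially no change to the rest of your write-up.
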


\begin{proof}
    Without loss of generality, we may assume that $0<a_2<a_1$. It is clear that $\JN_p^{\mathcal{Q}_{a_1}}(\RR^d,\gamma)\subseteq \JN_p^{\mathcal{Q}_{a_2}}(\RR^d,\gamma)$ since $K_p^{\mathcal{Q}_{a_2}}(f)\leq K_p^{\mathcal{Q}_{a_1}}(f)$  for every $f\in \JN_p^{\mathcal{Q}_{a_1}}(\RR^d,\gamma)$ and, therefore,
    \[\|f\|_{\JN_p^{\mathcal{Q}_{a_2}}(\RR^d,\gamma)}\leq \|f\|_{\JN_p^{\mathcal{Q}_{a_1}}(\RR^d,\gamma)}, \quad f\in \JN_p^{\mathcal{Q}_{a_1}}(\RR^d,\gamma).\]

    We are going to see the other inclusion. Let $f\in L^1(\RR^d,\gamma)$ and $Q\in \mathcal{Q}_{a_1}$. As in the proof of \cite[Proposition~2.3]{MM}, there exist $N$ cubes $Q_1, \dots, Q_N\in \mathcal{Q}_{a_2}$ contained in $Q$ and a positive constant $C$ such that $\gamma(Q_j)\leq \gamma(Q)\leq C\gamma(Q_j)$ for every $j=1,\dots, N$, and
    \[\frac{1}{\gamma(Q)}\int_Q |f-f_Q|d\gamma \leq C \sum_{j=1}^N \frac{1}{\gamma(Q_j)}\int_{Q_j} |f-f_{Q_j}|d\gamma.\]
    Here, $C>0$ and $N\in \NN$ do not depend on the cube $Q$.

    Consider now a family $\{Q_i\}_{i\in \NN}$ of cubes in $\mathcal Q_{a_1}$ such that $Q_i\cap Q_j=\emptyset$ for every $i,j\in \NN$, $i\neq j$. For a fixed $i\in \NN$, we consider the collection of cubes $\{Q_{i,1}, \dots, Q_{i,N} \}$ in $\mathcal Q_{a_2}$ associated with $Q_i$ as above. Hence, there exists $C>0$ for which
    \begin{align*}
        \sum_{i=1}^\infty&\gamma(Q_i)\left(\frac{1}{\gamma(Q_i)}\int_{Q_i} |f-f_{Q_i}|d\gamma\right)^p\\
        &\leq C \sum_{i=1}^\infty\gamma(Q_i)\left(\sum_{j=1}^N\frac{1}{\gamma(Q_{i,j})}\int_{Q_{i,j}} |f-f_{Q_{i,j}}|d\gamma\right)^p\\
        &\leq C \sum_{i=1}^\infty\gamma(Q_i)\sum_{j=1}^N\left(\frac{1}{\gamma(Q_{i,j})}\int_{Q_{i,j}} |f-f_{Q_{i,j}}|d\gamma\right)^p\\
        &\leq  C \sum_{j=1}^N\sum_{i=1}^\infty\gamma(Q_{i,j})\left(\frac{1}{\gamma(Q_{i,j})}\int_{Q_{i,j}} |f-f_{Q_{i,j}}|d\gamma\right)^p\\ 
        &\leq CN \left(K_p^{\mathcal{Q}_{a_2}}(f)\right)^p.
    \end{align*}
    Taking the supremum on the pairwise disjoint families $\{Q_i\}_{i\in \NN}$ in $\mathcal Q_{a_1}$, we get that
    \[K_p^{\mathcal{Q}_{a_1}}(f)\leq C_pN^{1/p}K_p^{\mathcal{Q}_{a_2}}(f),\]
    which gives  $\JN_p^{\mathcal{Q}_{a_2}}(\RR^d,\gamma)\subseteq \JN_p^{\mathcal{Q}_{a_1}}(\RR^d,\gamma)$ and the inclusion is also continuous.
\end{proof}

The following proposition establishes some relations between $\BMO(\mathbb R^d,\gamma)$ and $\JN_p(\RR^d,\gamma)$. 

\begin{prop}\label{prop: prop-2.2}\leavevmode
\begin{enumerate}
    \item\label{itm: prop-2.2-i} $\BMO(\mathbb R^d,\gamma)$ is continuously contained in $\JN_p(\RR^d,\gamma)$ for every $1<p<\infty$.
    \item\label{itm: prop-2.2-ii} For every $a>0$ and $f\in \BMO^{\mathcal{Q}_{a}}(\RR^d,\gamma)$, 
    \[\lim_{p\to\infty}\|f\|_{\JN_p^{\mathcal{Q}_{a}}(\RR^d,\gamma)}=\|f\|_{\BMO^{\mathcal{Q}_{a}}(\RR^d,\gamma)}.\]
    Here, $\|f\|_{\BMO^{\mathcal{Q}_{a}}(\RR^d,\gamma)}=\sup_{Q\in \mathcal Q_a} \frac{1}{\gamma(Q)}\int_Q |f-f_Q|d\gamma+\|f\|_{L^1(\mathbb R^d,\gamma)}$, for ${f\in \BMO(\mathbb R^d,\gamma)}$.
\end{enumerate}
\end{prop}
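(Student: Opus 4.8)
The plan is to prove the two parts of Proposition~\ref{prop: prop-2.2} separately, both building on the basic comparison between averaging over $\mathcal{Q}_a$-cubes and the $\JN_p$-sum.

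\textbf{Part (a).} For a fixed $f\in\BMO(\RR^d,\gamma)$ and any pairwise disjoint family $\{Q_i\}_{i\in\NN}\subset\mathcal{Q}_a$, I would estimate directly
\[
\sum_i\gamma(Q_i)\Bigl(\frac{1}{\gamma(Q_i)}\int_{Q_i}|f-f_{Q_i}|\,d\gamma\Bigr)^p
\le \|f\|_{\star,\mathcal{Q}_a}^p\sum_i\gamma(Q_i)
\le \|f\|_{\star,\mathcal{Q}_a}^p\,\gamma(\RR^d),
\]
using that each inner average is bounded by the $\BMO$ seminorm and that the $Q_i$ are disjoint subsets of $\RR^d$, which has finite Gaussian measure $\gamma(\RR^d)=1$. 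Taking the supremum gives $K_p^{\mathcal{Q}_a}(f)\le \gamma(\RR^d)^{1/p}\|f\|_{\star,\mathcal{Q}_a}$, and adding the $L^1$-norm yields $\|f\|_{\JN_p^{\mathcal{Q}_a}(\RR^d,\gamma)}\le C\|f\|_{\BMO^{\mathcal{Q}_a}(\RR^d,\gamma)}$. Then one invokes Proposition~\ref{prop: prop-2.1} (and the stated equivalence of the $\mathcal{Q}_a$- and $\mathcal{B}_1$-defined $\BMO$ norms from~\cite{MM}) to pass to the $a$-independent spaces; this establishes the continuous inclusion for every $1<p<\infty$.

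\textbf{Part (b).} Here I would show the two inequalities separately. The direction $\limsup_{p\to\infty}\|f\|_{\JN_p^{\mathcal{Q}_a}}\le\|f\|_{\BMO^{\mathcal{Q}_a}}$ follows from the computation in Part (a): since $\gamma(\RR^d)^{1/p}\to1$, we get $\limsup_{p\to\infty}K_p^{\mathcal{Q}_a}(f)\le\|f\|_{\star,\mathcal{Q}_a}$, and adding $\|f\|_{L^1(\RR^d,\gamma)}$ gives the bound on $\limsup\|f\|_{\JN_p^{\mathcal{Q}_a}}$. For the reverse direction $\liminf_{p\to\infty}\|f\|_{\JN_p^{\mathcal{Q}_a}}\ge\|f\|_{\BMO^{\mathcal{Q}_a}}$, I would test the $\JN_p$-sum on a single cube: for any $Q\in\mathcal{Q}_a$, the singleton family $\{Q\}$ gives $K_p^{\mathcal{Q}_a}(f)\ge\gamma(Q)^{1/p}\cdot\frac{1}{\gamma(Q)}\int_Q|f-f_Q|\,d\gamma$. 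Letting $p\to\infty$ (for the fixed $Q$) makes the factor $\gamma(Q)^{1/p}\to1$, so $\liminf_{p\to\infty}K_p^{\mathcal{Q}_a}(f)\ge\frac{1}{\gamma(Q)}\int_Q|f-f_Q|\,d\gamma$; taking the supremum over $Q\in\mathcal{Q}_a$ yields $\liminf_{p\to\infty}K_p^{\mathcal{Q}_a}(f)\ge\|f\|_{\star,\mathcal{Q}_a}$, and adding the ($p$-independent) term $\|f\|_{L^1(\RR^d,\gamma)}$ gives the desired lower bound on $\liminf\|f\|_{\JN_p^{\mathcal{Q}_a}}$. Combining the two inequalities gives the limit.

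The only subtlety — and the step I would be most careful about — is the interchange of the supremum over cubes with the limit in $p$ in the $\liminf$ argument: the factor $\gamma(Q)^{1/p}$ depends on $Q$, so one cannot factor it out uniformly. But this is handled cleanly by first fixing $Q$, taking $\liminf_{p\to\infty}$ of the lower bound $\gamma(Q)^{1/p}\frac{1}{\gamma(Q)}\int_Q|f-f_Q|\,d\gamma$ to kill that factor, and only then taking the supremum over $Q$ on the right-hand side; since $\liminf_{p\to\infty}K_p^{\mathcal{Q}_a}(f)$ dominates each of these $Q$-by-$Q$ bounds, it dominates their supremum. No uniformity in $Q$ is needed. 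Everything else is elementary, relying only on $\gamma(\RR^d)=\pi^{-d/2}\int_{\RR^d}e^{-|x|^2}\,dx=1$ being finite.
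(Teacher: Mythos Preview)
Your proposal is correct and follows essentially the same approach as the paper's proof: for part~(a) you bound each mean oscillation by the $\BMO$ seminorm and sum the disjoint measures (exactly as the paper does with $a=1$), and for part~(b) you argue the $\limsup$ via part~(a) and the $\liminf$ by testing on a single cube $Q$ and letting $\gamma(Q)^{1/p}\to 1$ before taking the supremum over $Q$, which is precisely the paper's argument (adapted from~\cite[Proposition~2.6]{TYY}). Your explicit discussion of the order-of-limits issue in the $\liminf$ step is a nice clarification of something the paper leaves implicit.
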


\begin{proof}\leavevmode
    \begin{enumerate}
        \item Let $f\in \BMO(\mathbb R^d,\gamma)$, and suppose $\{Q_i\}_{i\in\NN}$ is a pairwise disjoint family of cubes in $\mathcal Q_1$. Thus, for every $1<p<\infty$, 
        \begin{align*}
          \sum_{i=1}^\infty\gamma(Q_i)\left(\frac{1}{\gamma(Q_i)}\int_{Q_i} |f-f_{Q_i}|d\gamma\right)^p&\leq \|f\|_{\BMO(\RR^d,\gamma)}^p\sum_{i=1}^\infty \gamma(Q_i)\\
          &\leq \|f\|_{\BMO(\RR^d,\gamma)}^p.
        \end{align*}
        Then,
        \[\|f\|_{\JN_p^{\mathcal{Q}_{1}}(\RR^d,\gamma)}\leq \|f\|_{\BMO(\RR^d,\gamma)}, \quad 1<p<\infty.\]

        \item We adapt an idea given in the proof of \cite[Proposition~2.6]{TYY}. Let $a>0$, $1<p<\infty$, $f\in \BMO(\mathbb R^d,\gamma)$, and consider $Q\in \mathcal Q_a$. We have that 
        \[\|f\|_{\JN_p^{\mathcal{Q}_{a}}(\RR^d,\gamma)}\geq \|f\|_{L^1(\mathbb R^d,\gamma)}+\gamma(Q)^{1/p}\frac{1}{\gamma(Q)}\int_Q|f-f_Q|d\gamma.\]
        Thus,
        \[\liminf_{p\to \infty}\|f\|_{\JN_p^{\mathcal{Q}_{a}}(\RR^d,\gamma)}\geq \|f\|_{L^1(\mathbb R^d,\gamma)}+\frac{1}{\gamma(Q)}\int_Q|f-f_Q|d\gamma,\]
        and we obtain that
        \begin{equation}\label{ecu: eq-4-prop-2.2}
            \liminf_{p\to \infty}\|f\|_{\JN_p^{\mathcal{Q}_{a}}(\RR^d,\gamma)}\geq \|f\|_{\BMO^{\mathcal{Q}_{a}}(\RR^d,\gamma)}.
        \end{equation}
        On the other hand, by proceeding as in \ref{itm: prop-2.2-i}, for every $1<p<\infty$,
        \begin{equation}\label{ecu: eq-5-prop-2.2}
            \|f\|_{\JN_p^{\mathcal{Q}_{a}}(\RR^d,\gamma)}\leq \|f\|_{\BMO^{\mathcal{Q}_{a}}(\RR^d,\gamma)}.
        \end{equation}
        From \eqref{ecu: eq-4-prop-2.2} and \eqref{ecu: eq-5-prop-2.2}, it follows that
        \[\lim_{p\to\infty}\|f\|_{\JN_p^{\mathcal{Q}_{a}}(\RR^d,\gamma)}=\|f\|_{\BMO^{\mathcal{Q}_{a}}(\RR^d,\gamma)}\]
        as desired. \qedhere
    \end{enumerate}
\end{proof}

\section{A John-Nirenberg inequality for \texorpdfstring{$\JN_p(\RR^d,\gamma)$}{JNp(Rd,gamma)}} \label{sec:JN-ineq}

We now prove Theorem~\ref{thm: teo-1.1}. Let $f\in \JN_p(\RR^d,\gamma)$ and $Q\in \mathcal{Q}_a$. We denote by $\lambda$ the Lebesgue measure in $\RR^d$. Proceeding as in the proof of \cite[Lemma~3]{JN} we can see that
\begin{equation}\label{ecu: eq-6}
    \lambda\left( \left\{ x\in Q: \left|f(x)- f_{Q,\lambda}\right|>\sigma\right\}\right) \leq 
    C \left(\frac{\mathbb{K}^{\mathcal{Q}_a}_p(f)}{\sigma}\right)^p,
\end{equation}
for $\sigma>0$ where
\begin{equation*}
    \mathbb{K}^{\mathcal{Q}_a}_p(f) =  \sup \left(
    \sum_{i=1}^\infty \lambda(Q_i) \left(
    \frac{1}{\lambda(Q_i)} \int_{Q_i} |f-f_{Q_i,\lambda}| d\lambda\right)^p\right)^{1/p}
\end{equation*}
and the supremum is taken over all the pairwise disjoint sequences $\{Q_i\}_{i=1}^{\infty}$ of cubes in $\mathcal Q_a$. Here, 
$f_{H,\lambda} = \frac{1}{\lambda(H)
}\int_H fd\lambda$ for every measurable set $H$ in $\RR^d$.

To see this, suppose that $H$ is a cube contained in $Q$. We have that
\begin{equation*}
    |c_H|\leq |c_H - c_Q| + |c_Q| \leq \sqrt{d} \ell_Q + |c_Q| \leq  a\sqrt{d} m(c_Q) + |c_Q|.
\end{equation*}
Then,
\begin{equation*}
    \begin{split}
        m(c_H)^{-1} &  = \max \{1, |c_H|\} \leq \max\{1,|c_Q|\} + a\sqrt{d}m(c_Q) 
        \\ & \leq m(c_Q)^{-1} + a\sqrt{d}m(c_Q) \leq  m(c_Q)^{-1} + a\sqrt{d}.
    \end{split}
\end{equation*}
Therefore
\begin{equation*}
    m(c_Q) \leq m(c_H)(1+a\sqrt{d}m(c_Q)) 
     \leq m(c_H)(1+a\sqrt{d})
\end{equation*}
and it follows that
\begin{equation*}
    \ell_H\leq \ell_Q \leq am(c_Q) \leq a (1+a\sqrt{d})m(c_H).
\end{equation*}
According to~\cite[Lemma~3]{JN} we deduce that
\begin{equation*}
    \lambda\left( \left\{ x\in Q: \left|f(x)- f_{Q,\lambda}\right|>\sigma\right\}\right) \leq 
    C \left(\frac{\mathbb{K}^{\mathcal{Q}_{a(1+a\sqrt{d})}}_p(f)}{\sigma}\right)^p,
\end{equation*}
for $\sigma>0$ and proceeding as in the proof of Proposition~\ref{prop: prop-2.1} we obtain that
\[\mathbb{K}^{\mathcal{Q}_{a(1+a\sqrt{d})}}_p(f) \leq C \mathbb{K}^{\mathcal{Q}_{a}}_p(f)\] and~\eqref{ecu: eq-6} is proved.

If $H\in \mathcal{Q}_a$, by using \cite[Proposition~2.1(i)]{MM} we get
\begin{equation*}
    \frac{1}{\lambda(H)} \int_H |f-f_{H,\lambda}| d\lambda \leq \frac{2}{\lambda(H)} \int_H |f-f_H| d\lambda\leq \frac{C}{\gamma(H)}\int_H |f-f_H| d\gamma.
\end{equation*}

Also, \cite[Proposition~2.1(i)]{MM} implies that if $b>0$ there exists $C>0$ such that for every measurable set $B\subset D$ with $D\in \mathcal{Q}_b$
\begin{equation*}
    C^{-1} \gamma(B) \leq e^{-|c_D|^2} \lambda(B) \leq C\gamma(B).
\end{equation*}

It follows that, for $\sigma>0$,
\begin{equation}\label{ecu: eq-aux}
    \gamma\left( \left\{ x\in Q: \left|f(x)- f_{Q,\lambda}\right|>\sigma\right\}\right) \leq 
    C \left(\frac{{K}^{\mathcal{Q}_{a}}_p(f)}{\sigma}\right)^p.
\end{equation}
Let $\sigma>0$. We have that
\begin{equation*}
\begin{split}
    \gamma\left( \left\{ x\in Q: \left|f(x)- f_{Q}\right|>\sigma\right\}\right) \leq & 
    \gamma\left( \left\{ x\in Q: \left|f(x)- f_{Q,\lambda}\right|>\sigma/2\right\}\right)
    \\ & +  \gamma\left( \left\{ x\in Q: \left|f_{Q,\lambda}- f_{Q}\right|>\sigma/2\right\}\right).
\end{split}
\end{equation*}

As above, we can write
\begin{equation*}
    \left|f_{Q,\lambda}- f_{Q}\right|
    \leq  \frac{1}{\lambda(Q)} \int_H |f-f_{Q}| d\lambda \leq \frac{C_0}{\gamma(Q)}\int_H |f-f_Q| d\gamma \leq C_0 \gamma(Q)^{-1/p} K^{\mathcal{Q}_a}_p(f),
\end{equation*}
for certain $C_0>0$. 

Then
\begin{equation*}
    \gamma\left( \left\{ x\in Q: \left|f_{Q,\lambda}- f_{Q}\right|>\sigma/2\right\}\right) 
    \leq 
    \begin{cases}
        \gamma(Q) & \text{ if } 0<\sigma\leq 2 C_0  \gamma(Q)^{-1/p} K^{\mathcal{Q}_a}_p(f),\\
        0 & \text{ if } \sigma<2 C_0  \gamma(Q)^{-1/p} K^{\mathcal{Q}_a}_p(f).
    \end{cases}
\end{equation*}

We obtain
\begin{equation*}
    \gamma\left( \left\{ x\in Q: \left|f_{Q,\lambda}- f_{Q}\right|>\sigma/2\right\}\right) 
    \leq C \gamma(Q) \left(\frac{\gamma(Q)^{-1/p} K^{\mathcal{Q}_a}_p(f)}{\sigma}\right)^p
    \leq C \left(\frac{ K^{\mathcal{Q}_a}_p(f)}{\sigma}\right)^p
\end{equation*}

Using this and estimate~\eqref{ecu: eq-aux} we conclude that 
\begin{equation*}
     \gamma\left( \left\{ x\in Q: \left|f(x)- f_{Q}\right|>\sigma\right\}\right) 
     \leq C \left(\frac{ K^{\mathcal{Q}_a}_p(f)}{\sigma}\right)^p
\end{equation*}
and the proof of Theorem~\ref{thm: teo-1.1} is finished.

\section{Duality}

In this section we prove Theorem~\ref{thm: teo-1.2}. In order to do so, we will establish some preliminary results related to the spaces involved and a covering lemma of admissible cubes.

\subsection{Properties of function spaces}

First we consider the space $\JN_{p,q}(\RR^d,\gamma)$ as follows. Let $1<p<\infty$, $1\leq q<\infty$ and $a>0$. A function $f\in L^1(\RR^d,\gamma)$ is said to be in $\JN_{p,q}^{\mathcal Q_a}(\RR^d,\gamma)$ when
\[K_{p,q}^{\mathcal Q_a}(f):=\sup\left(\sum_{i=1}^\infty \gamma(Q_i)\left(\frac{1}{\gamma(Q_i)}\int_{Q_i}|f-f_{Q_i}|^qd\gamma\right)^{p/q}\right)^{1/p}<\infty,\]
where the supremum is taken over all the pairwise disjoint sequences $\{Q_i\}_{i\in \NN}$ of cubes in $\mathcal Q_a$. The space $\JN_{p,q}^{\mathcal Q_a}(\RR^d,\gamma)$ is equipped with the norm $\|\cdot\|_{\JN_{p,q}^{\mathcal Q_a}(\RR^d,\gamma)}$ defined by
\[\|f\|_{\JN_{p,q}^{\mathcal Q_a}(\RR^d,\gamma)}:=\|f\|_{L^1(\RR^d,\gamma)}+K_{p,q}^{\mathcal Q_a}(f), \quad f\in \JN_{p,q}^{\mathcal Q_a}(\RR^d,\gamma).\]

When $a>0$ and $1\leq q<p$, $\JN_{p,q}^{\mathcal Q_a}(\RR^d,\gamma)$ actually does not depend on $a$ and $q$, as shown below.

\begin{prop}\label{prop: prop-4.1}
    Let $a>0$ and $1\leq q<p$. Then, $\JN_{p,q}^{\mathcal Q_a}(\RR^d,\gamma)=\JN_{p}(\RR^d,\gamma)$ algebraically and topologically.
\end{prop}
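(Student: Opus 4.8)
The plan is to prove the two-sided containment $\JN_p(\RR^d,\gamma)=\JN_{p,q}^{\mathcal Q_a}(\RR^d,\gamma)$ with equivalence of norms, for any fixed $a>0$ and $1\le q<p$. Once this is done, the independence of $\JN_{p,q}^{\mathcal Q_a}$ of the parameter $a$ follows from the already-established independence of $\JN_p(\RR^d,\gamma)$ of the cube parameter (Proposition~\ref{prop: prop-2.1}), and the independence of $q$ is automatic since the right-hand side $\JN_p(\RR^d,\gamma)$ does not involve $q$.

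The easy inclusion is $\JN_{p,q}^{\mathcal Q_a}(\RR^d,\gamma)\subseteq \JN_p(\RR^d,\gamma)$, which is just Jensen's inequality: since $q\ge 1$, for any cube $Q_i$ we have $\frac{1}{\gamma(Q_i)}\int_{Q_i}|f-f_{Q_i}|\,d\gamma\le\big(\frac{1}{\gamma(Q_i)}\int_{Q_i}|f-f_{Q_i}|^q\,d\gamma\big)^{1/q}$, so $K_p^{\mathcal Q_a}(f)\le K_{p,q}^{\mathcal Q_a}(f)$ and hence $\|f\|_{\JN_p^{\mathcal Q_a}(\RR^d,\gamma)}\le\|f\|_{\JN_{p,q}^{\mathcal Q_a}(\RR^d,\gamma)}$. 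The substantive direction is $\JN_p(\RR^d,\gamma)\subseteq\JN_{p,q}^{\mathcal Q_a}(\RR^d,\gamma)$, and here I would use the John–Nirenberg inequality just proved (Theorem~\ref{thm: teo-1.1}). Fix $f\in\JN_p(\RR^d,\gamma)$ and a pairwise disjoint family $\{Q_i\}_{i\in\NN}$ of cubes in $\mathcal Q_a$. For a single cube $Q\in\mathcal Q_a$, write $\int_Q|f-f_Q|^q\,d\gamma=q\int_0^\infty\sigma^{q-1}\gamma(\{x\in Q:|f(x)-f_Q|>\sigma\})\,d\sigma$ and split the integral at the threshold $\sigma_0:=\gamma(Q)^{-1/p}K_p^{\mathcal Q_a}(f)$. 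For $\sigma\le\sigma_0$ bound the distribution function trivially by $\gamma(Q)$; for $\sigma>\sigma_0$ use Theorem~\ref{thm: teo-1.1}, which gives $\gamma(\{x\in Q:|f-f_Q|>\sigma\})\le C\sigma^{-p}K_p^{\mathcal Q_a}(f)^p$. Since $q<p$, the tail integral $\int_{\sigma_0}^\infty\sigma^{q-1-p}\,d\sigma$ converges and equals a constant times $\sigma_0^{q-p}K_p^{\mathcal Q_a}(f)^p=\gamma(Q)^{(p-q)/p}K_p^{\mathcal Q_a}(f)^q$; the low part contributes $\sigma_0^q\gamma(Q)=\gamma(Q)^{(p-q)/p}K_p^{\mathcal Q_a}(f)^q$ as well. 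Combining, $\frac{1}{\gamma(Q)}\int_Q|f-f_Q|^q\,d\gamma\le C\,\gamma(Q)^{-q/p}K_p^{\mathcal Q_a}(f)^q$, i.e. $\gamma(Q)\big(\frac{1}{\gamma(Q)}\int_Q|f-f_Q|^q\,d\gamma\big)^{p/q}\le C\,K_p^{\mathcal Q_a}(f)^p\,\gamma(Q)^{0}$ — wait, more carefully: raising to the power $p/q$ gives $\big(\frac{1}{\gamma(Q)}\int_Q|f-f_Q|^q\,d\gamma\big)^{p/q}\le C\,\gamma(Q)^{-1}K_p^{\mathcal Q_a}(f)^p$, so multiplying by $\gamma(Q)$ yields $\gamma(Q)\big(\cdots\big)^{p/q}\le C\,K_p^{\mathcal Q_a}(f)^p$. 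This is a per-cube bound, not yet summable; to get summability over $i$ I would instead perform the same distributional computation but keep track of $\gamma(Q_i)$ so that the estimate reads $\gamma(Q_i)\big(\frac{1}{\gamma(Q_i)}\int_{Q_i}|f-f_{Q_i}|^q\,d\gamma\big)^{p/q}\le C\sum_{\text{some disjoint subfamily}}$ — the correct device is the standard one from the non-weighted $\JN_p$ theory: apply the layer-cake identity to $\sum_i\int_{Q_i}|f-f_{Q_i}|^q\,d\gamma$ and use that Theorem~\ref{thm: teo-1.1} is uniform in $Q$, together with the elementary inequality relating $\big(\sum a_i\big)^{p/q}$ and $\sum a_i^{p/q}$ when $p/q>1$ is used in the reverse form via a normalization argument.

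More precisely, I would run the argument as in, e.g., \cite[proof of Theorem~3.1]{TYY} or the classical Campanato–Stampacchia scheme: set $M:=K_p^{\mathcal Q_a}(f)$, and for each $i$ split $\int_{Q_i}|f-f_{Q_i}|^q\,d\gamma$ via layer-cake at level $\sigma_i:=\big(\gamma(Q_i)^{-1}\int_{Q_i}|f-f_{Q_i}|\,d\gamma\big)$ — no: the cleanest route is to bound, for each $i$,
\[
\int_{Q_i}|f-f_{Q_i}|^q\,d\gamma\le q\int_0^\infty\sigma^{q-1}\min\Big\{\gamma(Q_i),\,CM^p\sigma^{-p}\Big\}\,d\sigma\le C\,M^q\,\gamma(Q_i)^{1-q/p},
\]
hence $\gamma(Q_i)\big(\gamma(Q_i)^{-1}\int_{Q_i}|f-f_{Q_i}|^q\,d\gamma\big)^{p/q}\le C\,M^p$. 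This alone is not summable, so the resolution is: Theorem~\ref{thm: teo-1.1} actually controls the finer quantity $K_p^{\mathcal Q_a}(f)$ through a \emph{family} of cubes, and one should instead note that for the purposes of this proposition it suffices to bound $K_{p,q}^{\mathcal Q_a}(f)$ by $C(K_p^{\mathcal Q_a}(f)+\|f\|_{L^1})$; to get the sum under control, apply the John–Nirenberg estimate not to each $Q_i$ separately but observe that, by Hölder/normalization, $\sum_i\gamma(Q_i)\big(\gamma(Q_i)^{-1}\int_{Q_i}|f-f_{Q_i}|^q\,d\gamma\big)^{p/q}$ can be written as $\sum_i\gamma(Q_i)\,\phi(i)$ with $\phi(i)\le CM^p/\gamma(Q_i)\cdot(\text{localized term})$ — in the classical proof one in fact bounds this sum directly by $CM^p$ using that $\sum_i\gamma(Q_i)\le\gamma(\RR^d)=1$ is \emph{not} needed because the exponent arrangement makes it telescope. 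I will follow the clean path: bound each term by the distributional estimate to get $\le C M^p\,\gamma(Q_i)^{1-q\cdot(p/q)/p}=CM^p$ — that exponent is $1-1=0$, giving $CM^p$ per term, which is wrong for an infinite sum, so the genuinely correct argument must exploit that the disjoint family can be processed by the layer-cake applied to the union: $\sum_i\int_{Q_i}|f-f_{Q_i}|^q\,d\gamma=\int_0^\infty q\sigma^{q-1}\sum_i\gamma(\{x\in Q_i:|f-f_{Q_i}|>\sigma\})\,d\sigma$, and for the outer level set, the key is that $\sum_i\gamma(\{x\in Q_i:|f-f_{Q_i}|>\sigma\})\le CM^p\sigma^{-p}$ \emph{holds uniformly} — this is precisely the content one extracts by applying the definition of $K_p^{\mathcal Q_a}$ to the subfamily of those $Q_i$ where the oscillation exceeds $\sigma$, via a stopping-time/Calderón–Zygmund decomposition inside each $Q_i$. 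That combined distributional bound, integrated against $q\sigma^{q-1}$ over $\sigma>$ (a suitable normalized threshold) and with the low part handled by $\sum_i\gamma(Q_i)\sigma_{\text{thr},i}^q$, yields $\sum_i\gamma(Q_i)\big(\gamma(Q_i)^{-1}\int_{Q_i}|f-f_{Q_i}|^q\big)^{p/q}\le CM^p$ after a final application of the power inequality.

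The main obstacle — and the step requiring the most care — is exactly this passage from the \emph{single-cube} John–Nirenberg inequality (Theorem~\ref{thm: teo-1.1}) to a bound on the \emph{summed} $q$-oscillations over an arbitrary disjoint family, i.e. showing $K_{p,q}^{\mathcal Q_a}(f)\le C K_p^{\mathcal Q_a}(f)$. In the Euclidean setting this is the heart of the $\JN_p=\JN_{p,q}$ equivalence and is typically handled by a Calderón–Zygmund stopping-time decomposition of each $Q_i$ at a level proportional to the oscillation threshold; here one must do this with \emph{admissible} cubes, so I would invoke the local doubling property of $\gamma$ on $\mathcal Q_a$ (\cite[Proposition 2.1]{MM}) to ensure the stopping cubes produced remain (after dilation by a fixed factor) admissible and that the dyadic-type subdivision of an admissible cube yields admissible subcubes with comparable Gaussian measure — both facts are already available in \cite{MM} and were used in Proposition~\ref{prop: prop-2.1}. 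Modulo that CZ machinery, everything else (Jensen for the easy inclusion, layer-cake plus $q<p$ for convergence of the tail integral, and the $L^1$ term passing through trivially) is routine.
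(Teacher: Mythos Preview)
Your easy inclusion and the layer-cake computation on a single cube are fine: from Theorem~\ref{thm: teo-1.1} and $q<p$ you correctly get
\[
\Big(\frac{1}{\gamma(Q)}\int_Q|f-f_Q|^q\,d\gamma\Big)^{p/q}\le C\,\gamma(Q)^{-1}\big(K_p^{\mathcal Q_a}(f)\big)^p,
\]
and you also correctly notice that this yields a constant bound \emph{per cube} that does not sum over an infinite disjoint family. The gap is in your attempts to repair this. Your summed distributional inequality $\sum_i\gamma(\{x\in Q_i:|f-f_{Q_i}|>\sigma\})\le CM^p\sigma^{-p}$ (even if granted) only controls $\sum_i\int_{Q_i}|f-f_{Q_i}|^q\,d\gamma$, and since $p/q>1$ there is no way to pass from this to $\sum_i\gamma(Q_i)^{1-p/q}\big(\int_{Q_i}|f-f_{Q_i}|^q\,d\gamma\big)^{p/q}$ by a ``power inequality''; convexity goes the wrong direction. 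The vague appeal to Calder\'on--Zygmund stopping times inside each $Q_i$ is pointing at the right mechanism but you never extract the statement that actually closes the argument.

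What the paper does---and what you are missing---is the observation that the \emph{proof} of Theorem~\ref{thm: teo-1.1} only ever uses subcubes of the given cube $Q$, so it in fact delivers the \emph{localized} estimate
\[
\|f-f_Q\|_{L^{p,\infty}(Q,\gamma)}\le C\,K_{p,Q}^{\mathcal Q_a}(f),
\qquad
K_{p,Q}^{\mathcal Q_a}(f):=\sup\Big(\sum_j\gamma(R_j)\Big(\frac{1}{\gamma(R_j)}\int_{R_j}|f-f_{R_j}|\,d\gamma\Big)^p\Big)^{1/p},
\]
the supremum running over pairwise disjoint admissible families $\{R_j\}$ \emph{contained in $Q$}. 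Feeding this into your layer-cake gives the per-cube bound
\[
\gamma(Q_i)\Big(\frac{1}{\gamma(Q_i)}\int_{Q_i}|f-f_{Q_i}|^q\,d\gamma\Big)^{p/q}\le C\big(K_{p,Q_i}^{\mathcal Q_a}(f)\big)^p,
\]
and now the right-hand side \emph{does} sum: for each $i$ choose a near-optimal disjoint family $\{Q_{i,j}\}_j\subset Q_i$ with $(K_{p,Q_i}^{\mathcal Q_a}(f))^p\le\sum_j\gamma(Q_{i,j})(\cdots)^p+\epsilon/2^i$; since the $Q_i$ are pairwise disjoint, the combined family $\{Q_{i,j}\}_{i,j}$ is also pairwise disjoint in $\mathcal Q_a$, whence $\sum_i(K_{p,Q_i}^{\mathcal Q_a}(f))^p\le(K_p^{\mathcal Q_a}(f))^p+\epsilon$. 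This two-line device (localize, then recombine) is the entire content you were groping for; the Calder\'on--Zygmund machinery you invoke is already buried in the proof of Theorem~\ref{thm: teo-1.1} and need not be rerun.
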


\begin{proof}
    By using H\"older inequality with $q$ and $q'$ we easily get $\JN_{p,q}^{\mathcal Q_a}(\RR^d,\gamma)\subseteq \JN_{p,1}^{\mathcal Q_a}(\RR^d,\gamma)=\JN_{p}(\RR^d,\gamma)$ for any $a>0$, and the inclusion is continuous. Here, we have used Proposition~\ref{prop: prop-2.1}.

    We will now prove the other inclusion. Let $Q\in \mathcal Q_a$. Since $1\leq q<p$, $L^{p,\infty}(Q,\gamma)$ is continuously contained in $L^q(Q,\gamma)$, and there exists a constant $C>0$ independent of $Q$ such that
    \[\|g\|_{L^q(Q,\gamma)}\leq C \gamma(Q)^{1/q-1/p}\|g\|_{L^{p,\infty}(Q,\gamma)}, \quad g\in L^{p,\infty}(Q,\gamma).\]
    Indeed, given $g\in L^{p,\infty}(Q,\gamma)$ we can write, for $t=\|g\|_{L^{p,\infty}(Q,\gamma)}\gamma(Q)^{-1/p}$,
    \begin{align*}
        \|g\|_{L^q(Q,\gamma)}^q&=q\int_0^\infty \sigma^{q-1}\gamma(\{x\in Q: |g(x)|>\sigma\}) d\sigma\\
        &\leq q\left(\int_0^{t} \sigma^{q-1}\gamma(Q)d\gamma+\int_t^\infty \sigma^{q-1}\gamma(\{x\in Q: |g(x)|>\sigma\})\right)\\
        &\leq \gamma(Q) t^q+q\int_t^\infty \sigma^{q-1-p}\|g\|_{L^{p,\infty}(Q,\gamma)}^p d\sigma\\
        &=\gamma(Q) t^q+q\|g\|_{L^{p,\infty}(Q,\gamma)}^p\frac{t^{q-p}}{p-q}\\
        &=\gamma(Q)^{1-q/p}\|g\|_{L^{p,\infty}(Q,\gamma)}^q+\frac{q}{p-q}\gamma(Q)^{1-q/p}\|g\|_{L^{p,\infty}(Q,\gamma)}^{q}\\
        &=\frac{p}{p-q}\gamma(Q)^{1-q/p}\|g\|_{L^{p,\infty}(Q,\gamma)}^{q}.
    \end{align*}
    Hence,
    \[\|g\|_{L^q(Q,\gamma)}\leq C_{p,q}\gamma(Q)^{1/q-1/p}\|g\|_{L^{p,\infty}(Q,\gamma)}^{q}.\]
    Now, let $f\in \JN_p(Q,\gamma)$.  By proceeding as in the proof of Theorem~\ref{thm: teo-1.1} we can deduce that
    \[\|f-f_Q\|_{L^{p,\infty}(Q,\gamma)}\leq C K^{\mathcal Q_{a(1+\sqrt{d}a)}}_{p,Q}(f)\leq CK^{\mathcal Q_{a}}_{p,Q}(f),\]
    where 
    \[K^{\mathcal Q_b}_{p,Q}(f):=\sup\left(\sum_{i=1}^\infty \gamma(Q_i)\left(\frac{1}{\gamma(Q_i)}\int_{Q_i}|f-f_{Q_i}|d\gamma\right)^{p}\right)^{1/p}<\infty,\]
   and the supremum is taken over all the pairwise disjoint sequences $\{Q_i\}_{i\in \NN}$ of cubes in $\mathcal Q_a$ contained in $Q$.

   Then, $f-f_Q\in L^{q}(Q,\gamma)$ and
   \begin{align*}
    \left(\frac{1}{\gamma(Q)}\int_Q|f-f_Q|^q d\gamma\right)^{1/q}&\leq C_{p,q}\gamma(Q)^{-1/p}\|f-f_Q\|_{L^{p,\infty}(Q,\gamma)}\\
    &\leq C\gamma(Q)^{-1/p}K^{\mathcal Q_{a}}_{p,Q}(f).
   \end{align*}


   

    Suppose that $\{Q_i\}_{i\in \NN}$ is a pointwise sequence of cubes in $\mathcal Q_a$. From the above inequality we have
    \[\sum_{i=1}^\infty \gamma(Q_i)\left(\frac{1}{\gamma(Q_i)}\int_{Q_i}|f-f_{Q_i}|^q d\gamma\right)^{p/q}\leq C\sum_{i=1}^\infty \left(K^{\mathcal Q_{a}}_{p,Q_i}(f)\right)^p.\]

    Let $\epsilon>0$. For every $i\in \NN$, we choose a pairwise disjoint sequence $\{Q_{i,j}\}_{j\in \NN}$ of cubes in $\mathcal{Q}_a$ for which
    \[\left(K^{\mathcal Q_{a}}_{p,Q_i}(f)\right)^p\leq \sum_{j=1}^\infty \gamma(Q_{i,j})\left(\frac{1}{\gamma(Q_{i,j})}\int_{Q_{i,j}}|f-f_{Q_{i,j}}|d\gamma\right)^{p}+\frac{\epsilon}{2^i}.\]

    We get
    \begin{align*}
        \sum_{i=1}^\infty \gamma(Q_i)&\left(\frac{1}{\gamma(Q_i)}\int_{Q_i}|f-f_{Q_i}|^q d\gamma\right)^{p/q}\\
        &\leq C\sum_{i=1}^\infty \sum_{j=1}^\infty \gamma(Q_{i,j})\left(\frac{1}{\gamma(Q_{i,j})}\int_{Q_{i,j}}|f-f_{Q_{i,j}}|d\gamma\right)^{p}+\sum_{i=1}^\infty\frac{\epsilon}{2^i}\\
        &\leq C\left(K^{\mathcal{Q}_a}_p(f)\right)^p+\epsilon.
    \end{align*}
    The arbitrariness of $\epsilon>0$ allows us to obtain
    \[\left(\sum_{i=1}^\infty \gamma(Q_i)\left(\frac{1}{\gamma(Q_i)}\int_{Q_i}|f-f_{Q_i}|^q d\gamma\right)^{p/q}\right)^{1/p}\leq CK^{\mathcal{Q}_a}_p(f).\]
    This implies that $\JN_p(\RR^d,\gamma)$ is continuously contained in $\JN_{p,q}^{\mathcal Q_a}(\RR^d,\gamma)$ so the proof is now finished.
\end{proof}

\begin{prop}\label{prop: prop-4.2}
    Let $1<r<s<\infty$ and $a>0$. The linear space 
    \[A_{a,s}:=\operatorname{span} \left\{\left(\bigcup_{Q\in \mathcal Q_a} L_0^{s}(Q,\gamma)\right) \cup\{c\chi_{\RR^d}: c\in \CC\}\right\}\]
    is dense in $H_{r,s,a}(\RR^d,\gamma)$.
\end{prop}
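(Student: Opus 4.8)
The plan is to show that any $g \in H_{r,s,a}(\RR^d,\gamma)$ can be approximated in the $H_{r,s,a}$-norm by elements of $A_{a,s}$. Write $g = c_0 + \sum_{i=1}^\infty g_i$ with $c_0 = \int_{\RR^d} g\, d\gamma$, each $g_i$ a $(r,s,a)$-polymer, and $\sum_{i=1}^\infty \|g_i\|_{(r,s,a)} < \infty$. Given $\varepsilon > 0$, first choose $M \in \NN$ so that $\sum_{i > M} \|g_i\|_{(r,s,a)} < \varepsilon$; then $g - c_0 - \sum_{i=1}^M g_i$ has $H_{r,s,a}$-norm less than $\varepsilon$, so it suffices to approximate each single polymer $g_i$ by a finite sum of functions in $\bigcup_{Q \in \mathcal Q_a} L_0^s(Q,\gamma)$, up to small $\|\cdot\|_{(r,s,a)}$-error (note $c_0\chi_{\RR^d}\in A_{a,s}$ already). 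So the problem reduces to: given a single $(r,s,a)$-polymer $h = \sum_{j=1}^\infty b_j$ with $b_j \in \mathcal A(s,a,Q_j)$, $\{Q_j\}$ pairwise disjoint, and $\sum_j \gamma(Q_j)\big(\gamma(Q_j)^{-1}\int_{Q_j}|b_j|^s\,d\gamma\big)^{r/s} < \infty$, approximate $h$ by a finite partial sum $\sum_{j=1}^N b_j$.

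For that, I would bound $\|h - \sum_{j=1}^N b_j\|_{(r,s,a)}$ directly by the defining quantity using the tail: since $\sum_{j=N+1}^\infty b_j$ is itself a $(r,s,a)$-polymer built from the same pairwise disjoint family $\{Q_j\}_{j > N}$ with the same $b_j$'s, we get
\[
    \Big\| h - \sum_{j=1}^N b_j \Big\|_{(r,s,a)}^r \leq \sum_{j=N+1}^\infty \gamma(Q_j)\left(\frac{1}{\gamma(Q_j)}\int_{Q_j}|b_j|^s\,d\gamma\right)^{r/s},
\]
which tends to $0$ as $N \to \infty$ because the full series converges. The finite sum $\sum_{j=1}^N b_j$ lies in $A_{a,s}$ since each $b_j \in L_0^s(Q_j,\gamma)$ with $Q_j \in \mathcal Q_a$. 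Combining the two reductions, for given $\varepsilon$ we first truncate the polymer series at $M$, then for each $i \leq M$ pick a representation of $g_i$ realizing $\|g_i\|_{(r,s,a)}$ up to $\varepsilon/(M 2^i)$ and truncate its internal $b_j$-series at a large enough index $N_i$; the resulting finite linear combination $c_0\chi_{\RR^d} + \sum_{i=1}^M \sum_{j=1}^{N_i} b_{ij}$ lies in $A_{a,s}$ and is within a constant multiple of $\varepsilon$ of $g$ in $H_{r,s,a}(\RR^d,\gamma)$.

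The only genuinely delicate point is the bookkeeping: one must be careful that truncating the inner series of a polymer still yields an admissible approximant, i.e. that $\sum_{j=1}^{N_i} b_{ij}$ is a legitimate element of $A_{a,s}$ (a finite span of $L_0^s(Q_{ij},\gamma)$ functions) and that $g_i - \sum_{j=1}^{N_i} b_{ij}$ is still a $(r,s,a)$-polymer whose $\|\cdot\|_{(r,s,a)}$-norm is controlled by the tail of the convergent series $\sum_j \gamma(Q_{ij})\big(\gamma(Q_{ij})^{-1}\int_{Q_{ij}}|b_{ij}|^s\,d\gamma\big)^{r/s}$. This follows at once from the definitions, so there is no analytic obstacle; the role of the hypothesis $r < s$ is only implicit, via the fact that polymers and $H_{r,s,a}$ are defined for that range. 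I would finish by assembling the $\varepsilon$-estimates and invoking the triangle inequality for $\|\cdot\|_{H_{r,s,a}(\RR^d,\gamma)}$.
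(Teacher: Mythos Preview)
Your proposal is correct and follows essentially the same approach as the paper: truncate the outer polymer series $\sum_i g_i$ at some finite index, then truncate each remaining polymer's inner atomic series $\sum_j b_{ij}$ using the tail estimate $\|g_i - \sum_{j=1}^{N_i} b_{ij}\|_{(r,s,a)}^r \leq \sum_{j>N_i} \gamma(Q_{ij})\big(\gamma(Q_{ij})^{-1}\int_{Q_{ij}}|b_{ij}|^s\,d\gamma\big)^{r/s}$, and combine via the triangle inequality. Your extra step of choosing a near-optimal representation of each $g_i$ is harmless but unnecessary---any representation with finite sum has vanishing tail, which is all that is used.
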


\begin{proof}
    Suppose first that $g$ is an $(r,s,a)$-polymer. We can write $g=\sum_{j=1}^\infty b_j$ where, for every $j\in \NN$, $\supp b_j\subset Q_j\in \mathcal Q_a$ and $b_j\in L_0^s(Q_j,\gamma)$, being $\{Q_j\}_{j\in \NN}$ a sequence of pairwise disjoint cubes. Also, we have that \eqref{ecu: eq-polymer} holds.

    We can write 
    \[\left\|g-\sum_{j=1}^k b_j\right\|_{(r,s,a)}\leq \left(\sum_{j=k+1}^\infty \gamma(Q_j)\left(\frac{1}{\gamma(Q_j)} \int_{Q_j}|b_j|^sd\gamma\right)^{r/s} \right)^{1/r}.\]
    Due to the convergence of the series, for every $\epsilon>0$, there exists $j_0\in \NN$ such that
    \[\left\|g-\sum_{j=1}^{j_0} b_j\right\|_{(r,s,a)}<\epsilon.\]

    Given now $g\in H_{r,s,a}(\RR^d,\gamma)$, where $g=c_0+\sum_{i=1}^\infty g_i$ with $c_0\in \CC$, $g_i$ is an $(r,s,a)$-polymer for every $i\in \NN$, and $\sum_{i=1}^\infty \|g_i\|_{(r,s,a)}<\infty$, for every $\epsilon>0$, there exists $i_0\in \NN$ such that
    \[\left\|g-c_0-\sum_{i=1}^{i_0} g_i\right\|_{H_{r,s,a}(\RR^d,\gamma)}\leq \sum_{i=i_0+1}^\infty \|g_i\|_{(r,s,a)}<\frac{\epsilon}{2}.\]

    For each of these $(r,s,a)$-polymers $g_i$, we have that $g_i=\sum_{j=1}^\infty b_{ij}$ as above. Therefore, for every $i\in \NN$, there exists $l_i\in \NN$ such that
    \[\left\|g_i-\sum_{j=1}^{l} b_{ij}\right\|_{(r,s,a)}<\frac{\epsilon}{2i_0}, \quad \text{for every }l\geq l_i.\]

    Then,
    \begin{align*}
        &\left\|g-c_0-\sum_{i=1}^{i_0}\sum_{j=1}^{l_i} b_{ij}\right\|_{H_{r,s,a}(\RR^d,\gamma)} \\
        &\leq \left\|g-c_0-\sum_{i=1}^{i_0}g_i\right\|_{H_{r,s,a}(\RR^d,\gamma)}+\left\|\sum_{i=1}^{i_0}\left(g_i-\sum_{j=1}^{l_i} b_{ij}\right)\right\|_{H_{r,s,a}(\RR^d,\gamma)}\\
        &\leq \frac{\epsilon}{2}+\sum_{i=1}^{i_0}\left\|g_i-\sum_{j=1}^{l_i} b_{ij}\right\|_{(r,s,a)}<\epsilon.
    \end{align*}
    The proof is now concluded.
\end{proof}

\begin{prop}\label{prop: prop-4.3} 
    Let $a_1,a_2>0$ and $1<p<q<\infty$. Then 
    \[H_{p,q,a_1}(\RR^d,\gamma)=H_{p,q,a_2}(\RR^d,\gamma)\] 
    algebraically and topologically.
\end{prop}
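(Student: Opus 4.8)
\textbf{Proof proposal for Proposition~\ref{prop: prop-4.3}.}

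The plan is to show that the two norms $\|\cdot\|_{H_{p,q,a_1}(\RR^d,\gamma)}$ and $\|\cdot\|_{H_{p,q,a_2}(\RR^d,\gamma)}$ are equivalent, from which the algebraic and topological identification follows. Without loss of generality assume $0<a_2<a_1$. One inclusion is essentially trivial: since $\mathcal{Q}_{a_2}\subset \mathcal{Q}_{a_1}$, every $(p,q,a_2)$-polymer is automatically a $(p,q,a_1)$-polymer with the same defining data, hence $\|g\|_{(p,q,a_1)}\leq \|g\|_{(p,q,a_2)}$, and consequently $H_{p,q,a_2}(\RR^d,\gamma)\subseteq H_{p,q,a_1}(\RR^d,\gamma)$ with $\|g\|_{H_{p,q,a_1}}\leq \|g\|_{H_{p,q,a_2}}$. (The constant $c_0=\int_{\RR^d}gd\gamma$ is the same for both representations, so it plays no role here.)

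The substantive direction is $H_{p,q,a_1}(\RR^d,\gamma)\subseteq H_{p,q,a_2}(\RR^d,\gamma)$. By Proposition~\ref{prop: prop-4.2} it suffices to work with the dense subspace $A_{a_1,q}$, i.e.\ with finite sums of functions in $L_0^q(Q,\gamma)$ for $Q\in\mathcal{Q}_{a_1}$, together with constants; more concretely, it is enough to show that a single $(p,q,a_1)$-polymer $g=\sum_j b_j$ lies in $H_{p,q,a_2}(\RR^d,\gamma)$ with $\|g\|_{H_{p,q,a_2}}\leq C\|g\|_{(p,q,a_1)}$, with $C$ independent of the polymer, and then sum over the polymers comprising a general element of $H_{p,q,a_1}$. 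So fix $b\in L_0^q(Q,\gamma)$ with $Q\in \mathcal{Q}_{a_1}$; I want to decompose $b$ as a finite sum $b=\sum_{k=1}^{N} b_k$ with $b_k\in L_0^q(Q_k,\gamma)$, $Q_k\in\mathcal{Q}_{a_2}$, $Q_k\subseteq Q$, the $Q_k$ pairwise disjoint, and with the polymer-type bound
\begin{equation*}
    \sum_{k=1}^{N}\gamma(Q_k)\left(\frac{1}{\gamma(Q_k)}\int_{Q_k}|b_k|^q\,d\gamma\right)^{p/q}\leq C\,\gamma(Q)\left(\frac{1}{\gamma(Q)}\int_{Q}|b|^q\,d\gamma\right)^{p/q}.
\end{equation*}
The tool is exactly the splitting used in the proof of Proposition~\ref{prop: prop-2.1} (following \cite[Proposition~2.3]{MM}): there exist $N$ cubes $Q_1,\dots,Q_N\in\mathcal{Q}_{a_2}$ contained in $Q$, with $N$ and the comparability constants depending only on $a_1,a_2,d$, such that $\gamma(Q_k)\approx\gamma(Q)$ and $Q=\bigcup_k Q_k$ (one can arrange the $Q_k$ to be essentially disjoint, or pass to a disjoint refinement). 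On each $Q_k$ set $b_k := (b - b_{Q_k})\chi_{Q_k}$, which belongs to $L_0^q(Q_k,\gamma)$; the difference $b-\sum_k b_k = \sum_k b_{Q_k}\chi_{Q_k}$ is a \emph{bounded} function supported in $Q$ with zero $\gamma$-integral (using $\int_Q b\,d\gamma=0$), hence — after possibly absorbing it as one more term of the same type via a further trivial decomposition, or simply noting it is itself an element of $L_0^\infty(Q,\gamma)\subset L_0^q(Q,\gamma)$ on a cube in $\mathcal{Q}_{a_1}$, to which we reapply the splitting once more — it too can be handled. The averages $|b_{Q_k}|$ are controlled by $\gamma(Q_k)^{-1/q}\|b\|_{L^q(Q_k,\gamma)}$ via Jensen, and $\gamma(Q_k)\approx\gamma(Q)$, so every piece satisfies the required bound with a constant depending only on $a_1,a_2,d,p,q$.

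The main obstacle I expect is the bookkeeping around the ``correction term'' $\sum_k b_{Q_k}\chi_{Q_k}$: the naive decomposition $b=\sum_k(b-b_{Q_k})\chi_{Q_k}$ fails because the pieces need not have zero average unless $b_{Q_k}=0$, which is generally false even when $b_Q=0$. The clean fix is to observe that $\sum_k b_{Q_k}\chi_{Q_k}\in L^\infty_0(Q,\gamma)$ (it has mean zero over $Q$ because $\sum_k\gamma(Q_k)b_{Q_k}=\int_Q b\,d\gamma=0$ when the $Q_k$ partition $Q$), so it is already an admissible single-cube atom on $Q\in\mathcal{Q}_{a_1}$; applying the $\mathcal{Q}_{a_1}\to\mathcal{Q}_{a_2}$ splitting to \emph{it} produces pieces $(c - c_{Q_k})\chi_{Q_k}$ where $c$ is bounded, and iterating this at most once more terminates because a function that is constant on each $Q_k$ restricts to a constant on each sub-sub-cube, giving zero correction at the next stage. (Alternatively, one sidesteps the issue entirely by choosing the $Q_k$ from a dyadic-type subdivision of $Q$ so that the correction lies in the span of finitely many martingale-difference-type functions, each with mean zero on an admissible cube.) Once the single-polymer estimate is in place, summing over the polymers in a representation of $g\in H_{p,q,a_1}(\RR^d,\gamma)$ and taking the infimum gives $\|g\|_{H_{p,q,a_2}}\leq C\|g\|_{H_{p,q,a_1}}$, completing the proof; the $L^p(\RR^d,\gamma)$-convergence of the rearranged series is guaranteed throughout by \eqref{ecu: eq-norma-p-polymer}.
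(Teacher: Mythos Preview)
Your argument has a genuine gap in the treatment of the correction term. After writing $b=\sum_k (b-b_{Q_k})\chi_{Q_k}+c$ with $c=\sum_k b_{Q_k}\chi_{Q_k}\in L^\infty_0(Q,\gamma)$, you propose to reapply the same $\mathcal{Q}_{a_1}\to\mathcal{Q}_{a_2}$ splitting to $c$ and claim the iteration terminates. But $c$ is \emph{constant} on each $Q_k$, so $(c-c_{Q_k})\chi_{Q_k}=0$ for every $k$ and the new correction is $\sum_k c_{Q_k}\chi_{Q_k}=c$ again: the iteration is stationary, producing only the trivial identity $c=0+\cdots+0+c$. Your parenthetical alternative about martingale-difference functions does not help either, since a Haar-type function adapted to the partition $\{Q_k\}$ is supported on all of $Q\in\mathcal{Q}_{a_1}$, not on a single cube of $\mathcal{Q}_{a_2}$. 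The underlying obstruction is that a \emph{disjoint} partition leaves no room to place the mean-zero correction inside a single small cube.

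The paper resolves exactly this point by using an \emph{overlapping} cover rather than a disjoint partition. Following \cite[Lemma~2.3]{MM}, one covers $Q$ by $2^d$ cubes $P_1,\dots,P_{2^d}$ of sidelength $\tfrac23\ell_Q$, each sharing a vertex with $Q$, whose common intersection is a central cube $P_0$. With the partition of unity $\psi_i=\chi_{P_i}/\sum_k\chi_{P_k}$ one sets $v_i=v\psi_i-\lambda_i\chi_{P_0}$, where $\lambda_i=\gamma(P_0)^{-1}\int v\psi_i\,d\gamma$; because $P_0\subset P_i$, each $v_i\in L^q_0(P_i,\gamma)$ with the correct support, and the remainder $v_0=v-\sum_{i}v_i$ lands in $L^q_0(P_0,\gamma)$. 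Each step shrinks sidelengths by the factor $2/3$, so a fixed number $n=n(a_1,a_2,d)$ of iterations puts every cube into $\mathcal{Q}_{a_2}$. Since the $P_i$'s overlap, the atoms produced from a single $b_j$ cannot be packaged as one polymer; instead, applying this across all atoms of a $(p,q,a_1)$-polymer $g=\sum_j b_j$ yields $g=\sum_{i=1}^{i_0}V_i$ as a sum of at most $(1+2^d)^n$ many $(p,q,a_2)$-polymers, which suffices for $g\in H_{p,q,a_2}(\RR^d,\gamma)$ with the required norm control.
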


\begin{proof} Without loss of generality, we may assume $0<a_2<a_1$.
    Since $\mathcal Q_{a_2}\subset \mathcal Q_{a_1}$ it is immediate that $H_{p,q,a_2}(\RR^d,\gamma)$ is continuously contained in $H_{p,q,a_1}(\RR^d,\gamma)$.

    We now prove the converse inclusion. 

    Let us fix $Q\in \mathcal Q_{a_1}$ and $v\in L^q_0(Q,\gamma)$. We consider, as in  \cite[Lemma~2.3]{MM}, the family of $2^d$ cubes $P_i$ contained in $Q$ with sides parallel to the axes, each having sidelength $\ell_{P_i}=\frac23 \ell_Q$ and a vertex in common with the cube $Q$. They verify that \[\bigcap_{i=1}^{2^d} P_i=P_0,\] 
    where $P_0$ is a cube with $c_{P_0}=c_Q$ and $\ell_{P_0}=\frac13 \ell_Q$. As in the proof of \cite[Lemma~2.3]{MM}, it can be obtained that $P_i\in \mathcal{Q}_{\frac23 a_1(1+\sqrt{d}\frac{a_1}{2})}$ for each $i=0,\dots, 2^d$. Consequently,
    \begin{equation}\label{ecu: duplicacion cubos P}
        \gamma(P_i)\leq C_{d,a_1}\gamma(P_0), \quad i=1,\dots, 2^d.
    \end{equation}

    We define the functions and scalars given also in the aforementioned proof. For every $i=1,\dots, 2^d$,
    \[\psi_i=\frac{\chi_{P_i}}{\sum_{k=1}^{2^d}\chi_{P_k}}, \quad \lambda_i=\frac{1}{\gamma(P_0)}\int_{\RR^d} v\psi_i d\gamma,\]
    where $\chi_E$ denotes the characteristic function of the measurable set $E\subset \RR^d$, and set
    \[v_i=v\psi_i-\lambda_i \chi_{P_0}, \quad v_0=v-\sum_{i=1}^{2^d}v_i.\]

    For every $i=0,\dots, 2^d$ it is clear that $\supp v_i\subset P_i$ and $\int_{P_i} v_id\gamma=0$. Moreover, for $i=1,\dots, 2^d$, by H\"older inequality and \eqref{ecu: duplicacion cubos P} we get
    \begin{align*}
        \|v_i\|_{L^q_0(P_i,\gamma)}&\leq \|v\|_{L^q(P_i,\gamma)}+\left(\frac{1}{\gamma(P_0)}\int_{P_i} |v||\psi_i| d\gamma \right)\gamma(P_0)^{1/q}\\
        &\leq \|v\|_{L^q(P_i,\gamma)}+\|v\|_{L^q(P_i,\gamma)}\frac{\gamma(P_i)}{\gamma(P_0)}\\
        &\leq C\|v\|_{L^q(P_i,\gamma)},
    \end{align*}
    and 
    \[\|v_0\|_{L^q_0(P_0,\gamma)}\leq \|v\|_{L^q(P_0,\gamma)}+\sum_{i=1}^{2^d}\|v_i\|_{L^q(P_0,\gamma)}\leq (1+C2^d)\|v\|_{L^q(P_0,\gamma)}.\]

    If $\frac23 a_1(1+\sqrt{d}\frac{a_1}{2})\leq a_2$, we have $P_i\in \mathcal Q_{a_2}$ for every $i=0,\dots, 2^d$, so we are done. If, otherwise, there exists some $P_i$ not in $\mathcal Q_{a_2}$, we repeat the previous construction for each of these cubes not belonging to $\mathcal Q_{a_2}.$ If necessary, we iterate the argument.

    Notice that it will suffice to repeat this construction at most $n$ times, where
    \[n:=\min\left\{k\in \NN: \left(\tfrac23\right)^k a_1\left(1+\sqrt{d}\tfrac{a_1}{2}\right)\leq a_2\right\}.\]

    This process produce a decomposition of $Q$ into a family of cubes $\{P_i\}_{i=1}^{i_0}$ in $\mathcal Q_{a_2}$, a decomposition of $v$ into a family of functions $\{v_i\}_{i=1}^{i_0}$ for some $i_0\in \NN$ with $i_0\leq (1+2^d)^n$ such that, for every $i=1,\dots, i_0$, $\supp v_i\subset P_i$, $v_i\in L^q_0(P_i,\gamma)$ and $\|v_i\|_{L^q_0(P_i,\gamma)}\leq (1+C2^d)^n\|v\|_{L^q_0(Q,\gamma)}$. According to \cite[Proposition~2.1(i)]{MM}, for every $i=0,\dots, i_0$, $\gamma(P_i)\sim \gamma(Q)$, where the equivalence does not depend on $Q$.

    Let $g$ be a $(p,q,a_1)$-polymer, that is, $g=\sum_{j=1}^\infty v_j$ where, for each $j\in \NN$, $\supp v_j\subset Q_j\in \mathcal Q_{a_1}$ and $v_j\in L^q_0(Q_j,\gamma)$, and the sequence of cubes $\{Q_j\}_{j\in \NN}$ is pairwise disjoint with
    \begin{equation}\label{ecu: g pqa-polimero}
      \sum_{j=1}^\infty \gamma(Q_j)\left(\frac{1}{\gamma(Q_j)}\int_{Q_j}|v_j|^qd\gamma\right)^{p/q}<\infty.  
    \end{equation}

    Let $j\in \NN$. We work as before with $v_j$ and $Q_j$. Hence, we obtain a family of cubes $\{P_{ij}\}_{i=1}^{i_0}$ in $\mathcal Q_{a_2}$ and a collection $\{v_{ij}\}_{i=1}^{i_0}$ of functions satisfying that
    \begin{enumerate}
        \item \label{itm: subcubos} $Q_j=\left(\bigcup_{i=1}^{i_0}P_{ij}\right)\cup E_j$, where $|E_j|=0$;
        \item $\supp v_{ij}\subset P_{ij}$, $v_{ij}\in L^q_0(P_{ij},\gamma)$ and $\|v_{ij}\|_{L^q_0(P_{ij},\gamma)}\leq (1+C2^d)^n\|v_j\|_{L^q_0(Q_j,\gamma)}$ for every $i=1,\dots, i_0$;
        \item there exists $C=C(a_1,a_2,d)>0$ such that
        \[\frac1C\gamma(P_{ij})\leq \gamma(Q_j)\leq C\gamma(P_{ij}), \quad i=1,\dots, i_0;\]
        \item \label{itm: subatomos} $v_j=\sum_{i=1}^{i_0} v_{ij}$.
    \end{enumerate}

    Consequently, for any representation of $g=\sum_{j=1}^\infty v_j$ as above, we have another representation of $g=\sum_{ij}v_{ij}=\sum_{i=1}^{i_0}V_i$ satisfying the properties \ref{itm: subcubos}-\ref{itm: subatomos}. Moreover, for each $i=1,\dots, i_0$, $V_i$ is a $(p,q,a_2)$-polymer since
    \begin{align*}
       \|V_i\|_{(p,q,a_2)}^p &\leq \sum_{j=1}^\infty \gamma(P_{ij})\left(\frac{1}{\gamma(P_{ij})}\int_{P_{ij}}|v_{ij}|^q\right)^{p/q}  \\
       &\leq C \sum_{j=1}^\infty \gamma(Q_j)\left(\frac{1}{\gamma(Q_j)}\int_{Q_j}|v_j|^q\right)^{p/q}<\infty,
    \end{align*}
    and thus
    \[\|g\|_{H_{p,q,a_2}(\RR^d,\gamma)}\leq \sum_{i=1}^{i_0} \|V_i\|_{(p,q,a_2)}\leq  C\ i_0\|g\|_{(p,q,a_1)}\]
    being $g\in H_{p,q,a_2}(\RR^d,\gamma)$. Notice that the constant does not depend on $g$. Then,
    \[\|g\|_{H_{p,q,a_2}(\RR^d,\gamma)}\leq C\|g\|_{(p,q,a_1)}.\]

    Suppose now that $g=c_0+\sum_{j=1}^\infty g_j$ where $c_0\in \CC$ and $g_j$ is a $(p,q,a_1)$-polymer for each $j\in \NN$ with $\sum_{j=1}^\infty \|g_j\|_{(p,q,a_1)}<\infty$.
    We can represent each $g_j=\sum_{i=1}^{i_0} V_{ij}$ where  $V_{ij}$ is a $(p,q,a_2)$-polymer. Then, as before we can estimate 
    \[\sum_{i=1}^{i_0}\sum_{j=1}^\infty \|V_{ij}\|_{(p,q,a_2)}\leq C\sum_{j=1}^\infty\|g_j\|_{(p,q,a_1)}<\infty.\]
    We conclude that $g\in H_{p,q,a_2}(\RR^d,\gamma)$ with
    \[\|g\|_{H_{p,q,a_2}(\RR^d,\gamma)}\leq C\|g\|_{H_{p,q,a_1}(\RR^d,\gamma)}\]
    and the constant does not depend on $g$.
\end{proof}

\subsection{A covering lemma} We now introduce a covering by cubes in $\RR^n$ that will be very useful in the proof of Theorem~\ref{thm: teo-1.2}.

\begin{lem}\label{lem: cubrimiento}
    There exists a sequence of cubes $\{Q_n\}_{n\in\NN}$ such that
\begin{enumerate}
    \item $\RR^d = \cup_{n\in N}Q_n \cup E$ with $E$ having null Lebesgue (equivalently Gaussian) measure.
    \item \label{itm: Ad} For every $n\in\NN$, $Q_n\in \mathcal{Q}_{A_d}$, where $A_d = 2\sqrt{d} $.
    \item There exists $C_d$ depending only on the dimension $d$ such that 
    \[\# \{j\in \NN: Q_n\cap Q_j \neq \emptyset\}\leq C_d, \quad n\in\NN.\]
    \item \label{itm:layer properties} There exists, for each $k\in \NN$, $k\geq 2$, a subfamily of cubes in the covering, named the $k$-th layer $L_k$, such that $\# L_k \leq C k^{d-1}$ where $C$ only depends on $d$, and
if $Q\in L_k$ then $m(c_Q)\leq \ell_{Q}\leq 2\sqrt{d}m(c_Q)$ and there exists a constant $M$, independent of $k$, such that $\frac{1}{M}\sqrt{k}\leq |c_Q|\leq Mk^{d/2}$.
\end{enumerate}
\end{lem}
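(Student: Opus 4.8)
The plan is to construct the covering explicitly in dyadic-type annular shells around the origin. First I would handle the "core": since $m(x)=1$ on the unit ball, cover the cube $[-1,1]^d$ (or a slightly larger fixed cube) by finitely many congruent cubes of side length comparable to $1$; each such cube $Q$ has $c_Q$ bounded, hence $m(c_Q)\gtrsim 1$, so it lies in $\mathcal{Q}_{A_d}$ for $A_d=2\sqrt d$ after checking the numerology. This finite central family contributes nothing to the layers $L_k$, $k\ge 2$.

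For the region $|x|\ge 1$, the natural idea is to work in shells $S_k=\{x: k\le |x|< k+1\}$ for $k\in\NN$ and to tile each $S_k$ by cubes of side length $\approx 1/k$, since for $x\in S_k$ one has $m(x)=1/|x|\approx 1/k$. Concretely, partition $\RR^d$ by the grid $(\tfrac1k)\mathbb Z^d$ restricted to $S_k$ — more carefully, one should use a single consistent grid per shell and take all grid cubes meeting $S_k$. For a grid cube $Q$ of side $\ell_Q\approx 1/k$ meeting $S_k$, its center satisfies $|c_Q|\approx k$, so $m(c_Q)\approx 1/k\approx \ell_Q$, giving both $\ell_Q\le 2\sqrt d\, m(c_Q)$ (item \ref{itm: Ad}) and the two-sided bound $m(c_Q)\le \ell_Q\le 2\sqrt d\, m(c_Q)$ required for the layers; the estimate $\frac1M\sqrt k\le |c_Q|\le Mk^{d/2}$ in item \ref{itm:layer properties} is then immediate from $|c_Q|\approx k$ (in fact one gets $|c_Q|\approx k$, which is much stronger, but I would state it in the weaker robust form so that the iterated/adjusted construction still satisfies it). The layer $L_k$ is declared to be the family of cubes assigned to shell $S_k$; the count $\#L_k\le Ck^{d-1}$ follows because $S_k$ has volume $\approx k^{d-1}$ and each cube has volume $\approx k^{-d}\cdot k^{d}$... wait — volume $\approx (1/k)^d$, and $k^{d-1}/(1/k)^d = k^{2d-1}$, which is wrong; so I must instead note that $S_k$ is an annulus of radius $\approx k$ and thickness $O(1)$, hence can be covered by $\approx k^{d-1}$ balls of radius $\approx 1$, and refining each such unit ball into $\approx k^d$ cubes of side $1/k$ would overcount. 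The correct fix is to \emph{not} subdivide down to side $1/k$ for the whole shell but to cover $S_k$ by $O(k^{d-1})$ cubes of side comparable to $1$ times the reciprocal...

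Let me restate the shell construction so the count works: cover the annulus $S_k$ directly by $O(k^{d-1})$ cubes of side length $\asymp 1$ is impossible (they would not be admissible there since $m\asymp 1/k$), so instead one covers $S_k$ by cubes of side $\asymp 1/k$; the number needed is $\operatorname{vol}(S_k)/(1/k)^d \asymp k^{d-1}\cdot k^{d}$, which is too many. Hence the layering must be finer: one should further decompose shell $S_k$ itself or, better, choose the shells to be $S_k=\{2^{k}\le|x|<2^{k+1}\}$ and then within $S_k$ run the argument of \cite[Lemma~2.3]{MM} / a Whitney decomposition adapted to $m$. I would therefore base the proof on a \textbf{Whitney-type decomposition} of $\RR^d$ (or of $\RR^d\setminus\{0\}$) with respect to the "admissibility radius" $m(x)$: there is a standard construction producing cubes $Q$ with $\ell_Q\asymp m(c_Q)$ and bounded overlap $C_d$, giving items (1)–(3) at once; then item \ref{itm:layer properties} is obtained by \emph{grouping} the Whitney cubes into layers $L_k$ according to the dyadic size of $|c_Q|$, say $L_k=\{Q: \sqrt k\le |c_Q|< \sqrt{k+1}\}$ or an analogous indexing chosen precisely so that $\#L_k\le Ck^{d-1}$ — the count then comes from the bounded-overlap property together with $\sum_{Q\in L_k}\operatorname{vol}(Q)\le \operatorname{vol}\{x:\sqrt k\le|x|\lesssim \sqrt{k+1}\}$ and $\operatorname{vol}(Q)\asymp m(c_Q)^d\asymp k^{-d/2}$, which gives $\#L_k\lesssim k^{(d-2)/2}\cdot$(volume factor) — and one tunes the width of the $k$-th annulus (radii between $\asymp\sqrt k$ and $\asymp\sqrt{k+1}$, i.e. thickness $\asymp 1/\sqrt k$) so that the final bound is exactly $Ck^{d-1}$. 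The relations $m(c_Q)\le\ell_Q\le 2\sqrt d\,m(c_Q)$ and $\tfrac1M\sqrt k\le|c_Q|\le Mk^{d/2}$ are then just the defining properties of membership in $L_k$ (possibly after enlarging/shrinking Whitney cubes by a fixed factor to fit the constant $2\sqrt d$, exactly as in the passage preceding this lemma where $\mathcal Q_a\subset\mathcal Q_{a(1+a\sqrt d)}$-type adjustments are made).

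The main obstacle I anticipate is \textbf{bookkeeping the constants in item \ref{itm:layer properties} simultaneously}: one must choose the annular radii defining $L_k$ so that (a) the cardinality bound is exactly the stated $Ck^{d-1}$, (b) every Whitney cube is caught in exactly one layer, (c) the two-sided comparison between $\ell_Q$ and $m(c_Q)$ survives with the clean constant $2\sqrt d$, and (d) $|c_Q|$ lands in $[\tfrac1M\sqrt k, Mk^{d/2}]$. Getting all four from one indexing requires care: the honest choice is to let the $k$-th layer collect all Whitney cubes whose centers lie in the annulus of inner radius $r_k$ and outer radius $r_{k+1}$ where $r_k$ is defined implicitly by a volume-equalization condition forcing $\#L_k\asymp k^{d-1}$; one then checks $r_k\asymp\sqrt k$, which yields $|c_Q|\asymp\sqrt k$ (hence comfortably inside the required window, the upper bound $Mk^{d/2}$ being very generous), and $m(c_Q)\asymp 1/r_k\asymp 1/\sqrt k\asymp\ell_Q$. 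Everything else — items (1), (2), (3) — is the standard Whitney/covering machinery and I would dispatch it quickly, citing the analogous construction in \cite{MM}.
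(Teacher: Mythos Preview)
Your plan wanders, but by the end you have landed on the right numerology: annular shells with radii $r_k\asymp\sqrt{k}$, cubes of side $\asymp 1/\sqrt{k}\asymp m(c_Q)$, giving $\#L_k\asymp k^{d-1}$ via the volume comparison you sketch (do finish that arithmetic: the annulus has volume $\asymp k^{(d-2)/2}$, each cube has volume $\asymp k^{-d/2}$, quotient $k^{d-1}$). That is precisely the scale underlying the paper's proof, so conceptually you are on target.

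The paper, however, does not invoke an abstract Whitney decomposition. It builds the covering explicitly: define the recursive sequence $a_1=1$, $a_{k+1}=a_k+1/a_k$ (so $a_k\sim\sqrt{k}$, with the clean bounds $\sqrt{2k}\le a_{k+1}\le\sqrt{3k}$), take the nested cubes $P_k=(-a_k,a_k)^d$, and for each $k$ split the ``cube-annulus'' $R_k=P_{k+1}\setminus P_k$ into the $2d$ slabs $R_{k,j}^{\pm}$ of thickness $a_{k+1}-a_k=1/a_k$. Each slab is then chopped into congruent subcubes of side $1/a_k$ by a one-dimensional subdivision of $(-a_{k+1},a_{k+1})$ in the remaining $d-1$ coordinates; the cubes so obtained form the layer $L_k$. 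The cardinality bound $\#L_k\le Ck^{d-1}$, the admissibility $m(c_Q)\le\ell_Q\le 2\sqrt{d}\,m(c_Q)$, and $|c_Q|\asymp\sqrt{k}$ can then be read off directly from $a_k\asymp\sqrt{k}$ without any volume-counting argument; bounded overlap holds because cubes in different layers are disjoint and within a layer the subdivision is a grid with controlled boundary overlap.

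So: same $\sqrt{k}$-scaling idea, but the paper's explicit box-annulus construction avoids the bookkeeping you flag as the main obstacle (tuning annular widths, fitting Whitney constants to $2\sqrt d$), and it delivers the layer structure for free rather than as an a posteriori grouping of Whitney cubes. Your approach would work, but the explicit one is cleaner here and worth knowing.
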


\begin{proof}
We consider the interval $I=(\alpha,\beta)$ with $-\infty<\alpha<\beta<\infty$, and let ${0<\delta<\beta-\alpha}$. We divide the interval $I$ in subintervals with length $(\beta-\alpha)/\delta$ as follows:
\begin{enumerate}
    \item If $\beta = \alpha + \ell \,\frac{\beta-\alpha}{\delta}$, for some $\ell\in\mathbb{N}$, we write $I_j$ as above for $j=1,\dots,\ell$,
    \[I_j = \left(\alpha + (j-1) \frac{\beta-\alpha}{\delta}, \alpha + j\, \frac{\beta-\alpha}{\delta}\right)\]
    \item If $\alpha + (\ell-1) \,\frac{\beta-\alpha}{\delta}<\beta< \alpha + \ell \,\frac{\beta-\alpha}{\delta}$, for some $\ell\in\mathbb{N}$, we write $I_j$ as above for $j=1,\dots,\ell-1$
    and
    \[I_\ell = \left(\beta- \frac{\beta-\alpha}{\delta},\beta\right)\]
\end{enumerate}
When we divide the interval $I$ as above we say that $I$ is divided by $\delta$ finishing in~$\beta$.

Consider the sequence defined by
\begin{equation*}
    a_1 = 1 \qquad \text{ and } \qquad a_{k+1} = a_k + \frac{1}{a_k}, \; 
    k\geq 1,
\end{equation*}
which is increasing and $a_k\rightarrow \infty$ as $k\rightarrow \infty$. By proceeding as in~\cite[p. 298]{MM} we can see that $a_k\sim\sqrt{k}$. In fact,
\begin{equation*}
    \sqrt{2k}\leq a_{k+1} \leq \sqrt{3k},
\end{equation*}
for $k\in\NN$.

Let $k\in\NN$, $k\geq 1$. We define $P_k$ the cube of center $c_{P_k} = 0$ and side $\ell_{P_k} =2a_k$, and $R_k = P_{k+1} \setminus P_{k}$.

We have that 
\[R_k = \bigcup_{j=1}^d (R_{k,j}^+ \cup R_{k,j}^{-})\cup E,\]
where $E$ has null Lebesgue measure and, for every $j=1,\dots,d$,
\[R_{k,j}^+ = \{(x_1,\dots,x_d)\in\RR^d: a_k<x_j<a_{k+1}, \; |x_i|<a_{k+1}, i=1,\dots,d, i\neq j\}\]
and
\[R_{k,j}^- = \{(x_1,\dots,x_d)\in\RR^d: -a_{k+1}<x_j<-a_{k}, \; |x_i|<a_{k+1}, i=1,\dots,d, i\neq j\}.\]

Let $j=1,\dots,d$. 
We denote by 
$\{I_s^k\}_{s=1}^{\ell_{k+1}}$ the division of $(-a_{k+1},a_{k+1})$ by $1/a_k$ finishing in $a_{k+1}$. We name 
 $I_{0}^{k,+}=(a_k,a_{k+1})$. We define
\[H_{j,s_1,\dots,s_{j-1},s_{j+1},\dots,s_d}^+ = \left(\prod_{i=1}^{j-1}I_{s_i}^k\right)\times I_{0}^{k,+}\times \left(\prod_{i=j+1}^{d}I_{s_i}^k\right), \]
where $s_i\in\{1,\dots,\ell_{k+1}\}$, $i=1,\dots,d$, $i\neq j$.

We have that
\[R_{k,j}^+ = \cup H^+_{j,s_1,\dots,s_{j-1},s_0,s_{j+1},\dots,s_k}.\]

In a similar way we can write 
\[R_{k,j}^- = \cup H^-_{j,s_1,\dots,s_{j-1},s_{j+1},\dots,s_k},\]
where $H^-$ is defined as $H^+$ by replacing $I_{0}^{k,+}$ with $I_{0}^{k,-}=(-a_{k+1},-a_k)$.

Thus we obtain a covering (modulo a set with null Lebesgue measure) of $R_k$. The cubes in this covering will be called the cubes in the $k$-th layer. 

Note that if $Q$ and $Q'$ are cubes in different layers then $Q\cap Q' =\emptyset$. Also, there exists $C=C(d)$ such that, for every $k\in\mathbb{N}$ and every cube $Q$ in the $k$-th layer
\[\# \{Q': Q'\cap Q \neq \emptyset,\, Q' \text{ is a cube in the $k$-th layer}\} \leq C. \]

On the other hand, since $a_k\sim \sqrt{k}$ we have that 
\[ \# L_k \leq C k^{d-1},\]
where $L_k$ collects the cubes of the covering in the $k$-th layer, and there exists $M>0$ such that for every $Q\in L_k$, $\frac1M \sqrt{k}\leq |c_Q|\leq Mk^{d/2}$. We define $\{Q_n\}_{n\in \NN}=\bigcup_{k\in \NN} L_k$.
%
%
%
%
\end{proof}

Let $A>1$ and $Q\in \mathcal{Q}_A$ such that $m(c_Q)\leq \ell_Q$ and $|c_Q|>1/2$. Let $B$ be the ball inscribed in $Q$, that is, $c_B = c_Q$ and $r_B = \ell_Q/2$. We can construct a ball $M(B)$ such that $c_{M(B)}$ is in the segment joining $0$ and $c_B$, $c_B$ is in the boundary of $M(B)$ and $r_{M(B)} = m(c_{M(B)})/2$. 

To show this construction can be made suppose first that $|c_B|\geq3/2$. In this case we can set $c_{M(B)}$ in the segment joining $0$ and $c_B$ such that $$|c_{M(B)}| = \frac{|c_B|+\sqrt{|c_B|^2-2}}{2}.$$
Therefore $|c_{M(B)}|\geq 1$ and setting $r_{M(B)} = 1/(2|c_{M(B)}|)$ we obtain 
\[r_{M(B)}= \frac{m(c_{M(B)})}{2} \qquad \text{ and } \qquad |c_{M(B)}| + r_{M(B)} = |c_B|\]
showing that $c_B$ lies in the boundary of $M(B) = B(c_{M(B)}, r_{M(B)})$.

On the other hand, if $1/2 < |c_B| < 3/2$ we can choose $c_{M(B)}$ in the segment joining $0$ and $c_B$ such that $|c_{M(B)}| = |c_B| - 1/2$. Then, choosing $r_{M(B)} = 1/2$ we again that, since $|c_B|<1$,
\[r_{M(B)}= \frac{m(c_{M(B)})}{2} \qquad \text{ and } \qquad |c_{M(B)}| + r_{M(B)} = |c_B|\]
showing that $c_B$ lies in the boundary of $M(B) = B(c_{M(B)}, r_{M(B)})$.

Also, if we name $M(Q)$ the cube circumscribed around $M(B)$ we have that $c_{M(B)} = c_{M(Q)}$ and $\ell_{M(Q)} = 2r_{M(B)} = m(c_{M(Q)})$. Therefore, it is possible to iterate this procedure obtaining $M^2(Q) = M(M(Q))$ as long as $|c_{M(Q)}| > 1/2$. Given a cube $Q\in \mathcal{Q}_A$ such that $m(c_Q)\leq \ell_Q$ and $|c_Q|>1/2$ we will denote $K_Q$ the integer such that $|c_{M^{K_Q}(Q)}|\leq1/2$ and $|c_{M^{K_Q-1}(Q)}|> 1/2$.

\begin{lem}\label{lem: M(B)}
Let $A>1$ and $Q\in \mathcal{Q}_A$ such that $m(c_Q)\leq \ell_Q$ and $|c_Q|>1/2$. Let $B$ be the ball inscribed in $Q$, $M(B)$ and $M(Q)$ the ball and the cube obtained in the construction above, $B'$ the larger ball contained in $M(B)\cap B$ and $Q'$ the cube circumscribed around $B'$. Then there exists $K>0$ independent of $Q$ such that
\begin{equation*}
    \frac{\gamma(M(Q))}{\gamma(Q')} \leq K.
\end{equation*}

Also if $|c_Q|\leq D \sqrt{d}$ for some constant $D$, then $K_Q\leq D^2d$.
\end{lem}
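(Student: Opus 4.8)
The plan is to reduce the measure estimate to a comparison of side lengths by means of \cite[Proposition~2.1(i)]{MM}, which says that on a fixed admissible cube the Gaussian measure is comparable to a constant multiple of Lebesgue measure. First I would record two elementary facts. On the one hand $M(Q)\in\mathcal Q_1$, since by construction $\ell_{M(Q)}=2r_{M(B)}=m(c_{M(B)})=m(c_{M(Q)})$. On the other hand $Q'\subseteq M(Q)$: indeed $B'\subseteq M(B)\cap B\subseteq M(B)$, and since $Q'$ and $M(Q)$ are the smallest axis-parallel cubes containing $B'$ and $M(B)$ respectively, monotonicity of that operation gives the inclusion. Writing $\lambda$ for the Lebesgue measure and applying \cite[Proposition~2.1(i)]{MM} with the admissible cube $D=M(Q)$ to the two measurable subsets $M(Q)$ and $Q'$ of $M(Q)$, one obtains a constant $C>0$ (depending only on $d$) with
\[
    \frac{\gamma(M(Q))}{\gamma(Q')}\le C^2\,\frac{\lambda(M(Q))}{\lambda(Q')}=C^2\left(\frac{\ell_{M(Q)}}{\ell_{Q'}}\right)^{d}=C^2\left(\frac{r_{M(B)}}{r_{B'}}\right)^{d},
\]
so the whole problem is reduced to showing $r_{B'}\ge \tfrac14\, r_{M(B)}$.

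To prove this bound I would distinguish two cases. If $r_B\ge 2r_{M(B)}$, then, since $|c_B-c_{M(B)}|=r_{M(B)}$ because $c_B$ lies on $\partial M(B)$, the triangle inequality yields $M(B)\subseteq B$; hence $M(B)\cap B=M(B)$ and $r_{B'}=r_{M(B)}$. If instead $r_B<2r_{M(B)}$, put $e=c_B/|c_B|$; the ball centred at $c_B-\tfrac{r_B}{2}e$ of radius $\tfrac{r_B}{2}$ is internally tangent to $B$ and, because $\tfrac{r_B}{2}<r_{M(B)}$, is also contained in $M(B)$, so $r_{B'}\ge\tfrac{r_B}{2}$. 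Now the hypothesis $m(c_Q)\le\ell_Q$ gives $r_B=\tfrac{\ell_Q}{2}\ge\tfrac{m(c_Q)}{2}$, while a direct inspection of the two branches of the construction of $M(B)$ (the cases $|c_Q|\ge\tfrac32$ and $\tfrac12<|c_Q|<\tfrac32$) shows $r_{M(B)}\le m(c_Q)$; combining, $r_{B'}\ge\tfrac{r_B}{2}\ge\tfrac{m(c_Q)}{4}\ge\tfrac{r_{M(B)}}{4}$. This gives $\gamma(M(Q))/\gamma(Q')\le C^2 4^{d}=:K$ with $K$ independent of $Q$. I expect this to be the main obstacle: the comparability of $r_{B'}$ with $r_{M(B)}$ — i.e.\ the fact that the ``lens'' $M(B)\cap B$ is not much thinner than $M(B)$ — is exactly where the hypothesis $m(c_Q)\le\ell_Q$ is used, and everything else is bookkeeping.

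For the last assertion I would analyse the recursion that the construction imposes on the centres: at the $j$-th stage $c_{M^{j-1}(Q)}$ plays the role of $c_B$, so $|c_{M^{j}(Q)}|=|c_{M^{j-1}(Q)}|-\tfrac12\,m(c_{M^{j}(Q)})$. Squaring gives $|c_{M^{j-1}(Q)}|^2=|c_{M^{j}(Q)}|^2+1+\tfrac{1}{4|c_{M^{j}(Q)}|^2}$ when $|c_{M^{j}(Q)}|\ge1$, and $|c_{M^{j-1}(Q)}|^2=|c_{M^{j}(Q)}|^2+|c_{M^{j}(Q)}|+\tfrac14$ when $|c_{M^{j}(Q)}|<1$. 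Since $|c_{M^{j}(Q)}|<1$ forces $|c_{M^{j+1}(Q)}|=|c_{M^{j}(Q)}|-\tfrac12<\tfrac12$ and therefore terminates the iteration, one has $|c_{M^{j}(Q)}|\ge1$ for every iterate except possibly the last one or two; hence $|c_{M^{j-1}(Q)}|^2\ge|c_{M^{j}(Q)}|^2+1$ at all but $O(1)$ steps. Chaining these inequalities bounds the number of iterations (up to an additive constant) by $|c_Q|^2$, and since $|c_Q|\le D\sqrt d$ this yields $K_Q\le D^2 d$. This part is routine once the recursion is written down.
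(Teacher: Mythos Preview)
Your argument is correct and somewhat cleaner than the paper's. For the measure ratio, the paper does not make the reduction to sidelengths explicit; instead it splits into three subcases according to the positions of $|c_{M(B)}|$ and $|c_B|$ relative to $1$, and in each subcase derives a comparability $\ell_Q\asymp\ell_{M(Q)}$ by manipulating the difference $\ell_{M(Q)}-\ell_Q=2(r_{M(B)}-r_B)\le m(c_{M(B)})-m(c_B)$, before invoking the identity $2\ell_{Q'}=\ell_Q$ (equivalently $r_{B'}=r_B/2$ whenever $M(B)\not\subset B$). Your route collapses this case analysis into the single chain $r_{B'}\ge r_B/2\ge m(c_Q)/4\ge r_{M(B)}/4$, using the hypothesis $m(c_Q)\le\ell_Q$ and the easy check $r_{M(B)}\le m(c_Q)$, which is arguably more transparent. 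For $K_Q$ the paper argues more directly than you: since $|c_{M^j(Q)}|\le|c_Q|$ for all $j$, one has $\ell_{M^j(Q)}=m(c_{M^j(Q)})\ge m(c_Q)\ge\tfrac{1}{2D\sqrt d}$, so each step decreases $|c_{M^j(Q)}|$ by at least $\tfrac{1}{4D\sqrt d}$, giving $K_Q\le 4D\sqrt d\,|c_Q|\le 4D^2 d$; this avoids the squared recursion entirely. Both approaches yield the bound only up to a harmless absolute constant (the paper itself drops a factor of~$4$ in its displayed conclusion).
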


\begin{proof}
     Notice that if $M(B)\subset B$, then $M(Q) = Q'$ and 
    \[\frac{\gamma(M(Q))}{\gamma(Q')} = 1.\]

    If $M(B)\subsetneq B$ we distinguish 3 cases. First, suppose that $|c_{M(B)}| \geq 1$. Then
    \begin{equation*}
        \begin{split}
            \ell_{M(Q)} - \ell_Q & = 2 (r_{M(B)} - r_B)
            \leq m(c_{M(B)}) - m(c_B)
            \\ & = \frac{1}{|c_{M(B)}|} - \frac{1}{|c_B|}
            = \frac{|c_B|-|c_{M(B)}|}{|c_B||c_{M(B)}|}
            \\ & = \frac{r_{M(B)}}{|c_B||c_{M(B)}|}
            \leq 4 r_{M(B)}^2 r_B = \frac{\ell^2_{M(Q)} \ell_Q}{2}.
        \end{split}
    \end{equation*}
    Thus
    \begin{equation*}
        \ell_{M(Q)} \leq \ell_Q\left(1+\frac{\ell^2_{M(Q)}}{2}\right).
    \end{equation*}
    Now, 
    \begin{equation*}
        \ell_Q \leq A\ell_{M(Q)} \leq A \ell_Q\left(1+\frac{\ell^2_{M(Q)}}{2}\right)  \leq A \ell_Q\left(1+\frac{m^2(c_{M(Q)})}{2}\right) \leq \frac{3A}{2} \ell_Q.
    \end{equation*}
    Therefore
    \begin{equation*}
        \frac{2}{3} \ell_{M(Q)} \leq \ell_Q \leq A \ell_{M(Q)}.
    \end{equation*}
    Since $2\ell_{Q'} = \ell_Q$ the conclusion follows.

    Second, suppose that $M(B)\subsetneq B$ and $|c_{M(B)}|\leq |c_B| \leq 1$. In this case, $r_B= 2r_{B'} \geq r_{M(B)} $ and the conclusion follows immediately.

   Suppose now that $|c_{M(B)}| \leq 1 \leq |c_B|$. Then, proceeding similarly to the first case 
    \begin{equation*}
        \begin{split}
            \ell_{M(Q)} - \ell_Q \leq 2 r_{M(B)} r_B = \frac{\ell_{M(Q)} \ell_Q}{2}.
        \end{split}
    \end{equation*}
    Thus 
    \begin{equation*}
        \ell_{M(Q)} \leq \ell_Q\left(1+\frac{\ell_{M(Q)}}{2}\right).
    \end{equation*}
    Now, 
    \begin{equation*}
        \ell_Q \leq A\ell_{M(Q)} \leq A \ell_Q\left(1+\frac{\ell_{M(Q)}}{2}\right)  \leq A \ell_Q\left(1+\frac{m(c_{M(Q)})}{2}\right) \leq \frac{3A}{2} \ell_Q.
    \end{equation*}
    Therefore
    \begin{equation*}
        \frac{2}{3} \ell_{M(Q)} \leq \ell_Q \leq A \ell_{M(Q)}.
    \end{equation*}
    Since $2\ell_{Q'} = \ell_Q$ the conclusion follows.

    Finally, assume that $ |c_Q|\leq 
    D \sqrt{d}$, then
    \[\ell_{M^j(Q)} = m(c_{M^j(Q)}) \geq m(c_Q) \geq \frac{1}{2|c_Q|} \geq \frac{1}{2D\sqrt{d}}. \]
    Thus,
    \[K_Q \leq 4D\sqrt{d} |c_Q| \leq D^2 d. \qedhere \]
\end{proof}

\subsection{Proof of Theorem~\ref{thm: teo-1.2}}

We will prove \ref{itm: teo-1.2-a}. 

We first establish a result that will help us to better understand the proof of \ref{itm: teo-1.2-a}.

\begin{lem}\label{lem: dualidad a}
Let $1< q<p$, $a>0$, $f\in \JN_{p,q}^{\mathcal{Q}_a}(\RR^d,\gamma)$, and $C_1,C_2>0$. Assume that for every $i\in \NN$, $g_i$ is a $(p',q',a)$-polymer with $\sum_{i=1}^\infty \|g\|_{(p',q',a)}<\infty$. For every $i\in \NN$, we have that $g_i=\sum_{j=1}^\infty b_{ij}$ where, for every $j\in \NN$, $b_{ij}\in L_0^{q'}(Q_j,\gamma)$ and $\supp b_{ij}\subset Q_{ij}$, being $\{Q_{ij}\}_{j\in \NN}$ a family of pairwise disjoint cubes in $\mathcal{Q}_a$, and suppose that
\[\left(\sum_{j=1}^\infty \gamma(Q_{ij}) \left(\frac{1}{\gamma(Q_{ij})}\int_{Q_{ij}}|b_{ij}|^{q'}d\gamma \right)^{p'/q'}\right)^{1/p'}\leq C_1\|g_i\|_{(p',q',a)}, \quad i\in \NN.\]
Then, 
\begin{equation}\label{ecu: conv abs serie atómos}
\sum_{i=1}^\infty\sum_{j=1}^\infty \left|\int_{Q_{ij}}fb_{ij} d\gamma\right|\leq C_1 \|f\|_{\JN_{p,q}^{\mathcal{Q}_a}(\RR^d,\gamma)}\sum_{i=1}^\infty\|g_i\|_{(p',q',a)}.    
\end{equation}

If, in addition, $g=c_0+\sum_{i=1}^\infty g_i$ for some $c_0\in \CC$, and $|c_0|+\sum_{i=1}^\infty\|g_i\|_{(p',q',a)}\leq C_2 \|g\|_{H_{p',q',a}(\RR^d,\gamma)}$, then
\begin{equation}\label{ecu: conv abs serie Hardy}
  \left|c_0\int_{\RR^d}f d\gamma +\sum_{i=1}^\infty\sum_{j=1}^\infty \int_{Q_{ij}}fb_{ij} d\gamma\right|\leq (1+C_1)C_2\|f\|_{\JN_{p,q}^{\mathcal{Q}_a}(\RR^d,\gamma)}\|g\|_{H_{p',q',a}(\RR^d,\gamma)}.  
\end{equation}
\end{lem}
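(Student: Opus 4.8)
The plan is to prove \eqref{ecu: conv abs serie atómos} first by a direct Hölder-type estimate at the level of each cube $Q_{ij}$, and then obtain \eqref{ecu: conv abs serie Hardy} by combining it with the trivial estimate for the constant term. The starting point is that since $b_{ij}\in L_0^{q'}(Q_{ij},\gamma)$ has zero Gaussian integral over $Q_{ij}$, we may subtract the average $f_{Q_{ij}}$ for free:
\[
    \left|\int_{Q_{ij}} f b_{ij}\, d\gamma\right| = \left|\int_{Q_{ij}} (f-f_{Q_{ij}}) b_{ij}\, d\gamma\right| \leq \left(\int_{Q_{ij}} |f-f_{Q_{ij}}|^q d\gamma\right)^{1/q}\left(\int_{Q_{ij}} |b_{ij}|^{q'} d\gamma\right)^{1/q'},
\]
by Hölder's inequality with exponents $q$ and $q'$. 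Writing this in terms of averages, the right-hand side equals $\gamma(Q_{ij})\bigl(\tfrac{1}{\gamma(Q_{ij})}\int_{Q_{ij}}|f-f_{Q_{ij}}|^q d\gamma\bigr)^{1/q}\bigl(\tfrac{1}{\gamma(Q_{ij})}\int_{Q_{ij}}|b_{ij}|^{q'}d\gamma\bigr)^{1/q'}$.

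Next I would sum over $j$ for fixed $i$ and apply Hölder's inequality again, this time with exponents $p/q$ and $(p/q)' = p/(p-q)$ on the sequence level; note $p/(p-q)$ is the conjugate in the sense that $q/p + 1/(p/q)' = 1$, and one checks that the second factor's exponent matches $p'/q'$ since $\frac{1}{(p/q)'}\cdot\frac{1}{q} = \frac{p-q}{pq} = \frac{1}{q'} - \frac{1}{p'} = \frac{1}{q'}\cdot\frac{p'-1}{p'}\cdot\frac{p'}{p'-1}$... more cleanly, $p/(p-q) = p'/q' \cdot (q'/q)\cdot\dots$; the right bookkeeping is that raising the $|f-f_{Q_{ij}}|$-average to the power $q$ and summing with weight $\gamma(Q_{ij})$ gives something bounded by $K_{p,q}^{\mathcal{Q}_a}(f)^q$ (using that $\{Q_{ij}\}_j$ is pairwise disjoint in $\mathcal{Q}_a$), while raising the $|b_{ij}|$-average to the power $p'/q'$ and summing with weight $\gamma(Q_{ij})$ gives $\|g_i\|_{(p',q',a)}^{p'}$ up to the constant $C_1^{p'}$ by hypothesis. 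So for each $i$,
\[
    \sum_{j=1}^\infty \left|\int_{Q_{ij}} f b_{ij}\, d\gamma\right| \leq K_{p,q}^{\mathcal{Q}_a}(f)\left(\sum_{j=1}^\infty \gamma(Q_{ij})\left(\tfrac{1}{\gamma(Q_{ij})}\int_{Q_{ij}}|b_{ij}|^{q'}d\gamma\right)^{p'/q'}\right)^{1/p'} \leq C_1 K_{p,q}^{\mathcal{Q}_a}(f)\|g_i\|_{(p',q',a)}.
\]
Summing over $i\in\NN$ and bounding $K_{p,q}^{\mathcal{Q}_a}(f) \leq \|f\|_{\JN_{p,q}^{\mathcal{Q}_a}(\RR^d,\gamma)}$ yields \eqref{ecu: conv abs serie atómos}.

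For \eqref{ecu: conv abs serie Hardy}, note that $\bigl|c_0\int_{\RR^d} f\, d\gamma\bigr| \leq |c_0|\,\|f\|_{L^1(\RR^d,\gamma)} \leq |c_0|\,\|f\|_{\JN_{p,q}^{\mathcal{Q}_a}(\RR^d,\gamma)}$, and combine this with \eqref{ecu: conv abs serie atómos} via the triangle inequality to get a bound by $\|f\|_{\JN_{p,q}^{\mathcal{Q}_a}(\RR^d,\gamma)}\bigl(|c_0| + C_1\sum_i\|g_i\|_{(p',q',a)}\bigr) \leq (1+C_1)\|f\|_{\JN_{p,q}^{\mathcal{Q}_a}(\RR^d,\gamma)}\bigl(|c_0| + \sum_i\|g_i\|_{(p',q',a)}\bigr)$; finally apply the hypothesis $|c_0| + \sum_i\|g_i\|_{(p',q',a)} \leq C_2\|g\|_{H_{p',q',a}(\RR^d,\gamma)}$. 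The only genuine point requiring care — and the step I would expect to be the main obstacle — is verifying the exponent arithmetic in the double application of Hölder so that $(p,q)$ on the $f$-side and $(p',q')$ on the $b$-side match up correctly; everything else is bookkeeping, using that the cubes within each family $\{Q_{ij}\}_j$ are pairwise disjoint and lie in $\mathcal{Q}_a$ so that $K_{p,q}^{\mathcal{Q}_a}(f)$ controls the $f$-sum. Note that the $Q_{ij}$ for different $i$ need not be disjoint, which is precisely why we must sum over $j$ first, and then over $i$ only afterward.
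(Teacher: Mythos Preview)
Your proof is correct and follows essentially the same route as the paper's. The exponent arithmetic you flag as the main obstacle is simpler than you make it: the second H\"older is just with exponents $p$ and $p'$ applied to the weighted sum $\sum_j \gamma(Q_{ij})\,A_jB_j$ (with $A_j$ the $L^q$-oscillation average of $f$ and $B_j$ the $L^{q'}$-average of $b_{ij}$), which directly produces the powers $p/q$ and $p'/q'$ on the respective averages without any further bookkeeping.
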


\begin{proof} 

First, let us show that, for every $i\in\NN$, \eqref{ecu: conv abs serie atómos} holds, where $f,g_i, b_{ij}$ and the cubes are as in the hypotheses. 
Indeed, for $i\in \NN$, since $\int_{Q_{ij}}b_{ij}d\gamma=0$ for each $j\in \NN$, by H\"older inequality we get
\begin{align}\label{ecu: 8.5 de Jorge}
    \sum_{j=1}^\infty &\left|\int_{Q_{ij}} fb_{ij} d\gamma\right|=\sum_{j=1}^\infty \gamma(Q_{ij}) \frac{1}{\gamma(Q_{ij})}\left|\int_{Q_{ij}} (f-f_{Q_{ij}})b_{ij} d\gamma\right|\nonumber\\
    &\leq \sum_{j=1}^\infty \gamma(Q_{ij}) \left(\frac{1}{\gamma(Q_{ij})}\int_{Q_{ij}}|f-f_{Q_{ij}}|^q d\gamma \right)^{1/q} \left(\frac{1}{\gamma(Q_{ij})}\int_{Q_{ij}}|b_{ij}|^{q'} d\gamma \right)^{1/q'}\nonumber\\
    &\leq \left(\sum_{j=1}^\infty \gamma(Q_{ij}) \left(\frac{1}{\gamma(Q_{ij})}\int_{Q_{ij}}|f-f_{Q_{ij}}|^q d\gamma \right)^{p/q}\right)^{1/p}\nonumber\\
    &\quad \times\left(\sum_{j=1}^\infty \gamma(Q_{ij}) \left(\frac{1}{\gamma(Q_{ij})}\int_{Q_{ij}}|b_{ij}|^{q'} d\gamma \right)^{p'/q'}\right)^{1/p'}\nonumber\\
    &\leq \|f\|_{\JN_{p,q}^{\mathcal Q_a}(\RR^d,\gamma)}\left(\sum_{j=1}^\infty \gamma(Q_{ij}) \left(\frac{1}{\gamma(Q_{ij})}\int_{Q_{ij}}|b_{ij}|^{q'} d\gamma \right)^{p'/q'}\right)^{1/p'}\nonumber\\
    &\leq C_1\|f\|_{\JN_{p,q}^{\mathcal Q_a}(\RR^d,\gamma)}\|g_i\|_{(p',q',a)}.
\end{align}

On the other hand, 
if $g=c_0+\sum_{i=1}^\infty g_i$ for some $c_0\in \CC$ and $g_i$ as before for every $i\in \NN$, with $|c_0|+\sum_{i=1}^\infty\|g_i\|_{(p',q',a)}\leq C_2 \|g\|_{H_{p',q',a}(\RR^d,\gamma)}$, we can write
\begin{align*}
\left|c_0\int_{\RR^d}fd\gamma+\sum_{i=1}^\infty\right.&\left.\sum_{j=1}^\infty \int_{Q_{ij}} fb_{ij} d\gamma\right|\\
&\leq |c_0|\|f\|_{\JN_{p,q}^{\mathcal Q_a}(\RR^d,\gamma)}+ C_1 \|f\|_{\JN_{p,q}^{\mathcal Q_a}(\RR^d,\gamma)} \sum_{i=1}^\infty \|g_i\|_{(p',q',a)}\\
    &\leq (1+C_1)\|f\|_{\JN_{p,q}^{\mathcal Q_a}(\RR^d,\gamma)}\left(|c_0|+\sum_{i=1}^\infty\|g_i\|_{(p',q',a)}\right)\\
    &\leq (1+C_1)C_2\|f\|_{\JN_{p,q}^{\mathcal Q_a}(\RR^d,\gamma)} \|g\|_{H_{p',q',a}(\RR^d,\gamma)}.\qedhere
\end{align*}
\end{proof}

Let $f\in \JN_p(\RR^d,\gamma)$. According to Proposition~\ref{prop: prop-4.1}, given $a>0$, $f\in \JN_{p,q}^{\mathcal Q_a}(\RR^d,\gamma)$ for any $1< q<p$ so the previous results apply to $f$. 

We now adapt some of the ideas of \cite[pp.~599-600]{DHKY}. We define, for every $N\in \NN$,
\[f_N(x)=\begin{dcases*}
    f(x), & if $|f(x)|\leq N$\\ N\operatorname{sgn}(f(x)), & if $|f(x)|>N$. 
\end{dcases*}\]

Assume that $g\in H_{p',q',a}(\RR^d,\gamma)$. Our next objective is to see that the limit
\[\lim_{N\to \infty} \int_{\RR^d}f_N g d\gamma\]
exists, and also that
\[\Lambda_f g:=\lim_{N\to \infty} \int_{\RR^d}f_N g d\gamma\]
it satisfies 
\[|\Lambda_f g|\leq C \|f\|_{\JN_p(\RR^d,\gamma)} \|g\|_{H_{p',q',a}(\RR^d,\gamma)}\]
for some constant $C>0$ independent of $f$ and $g$.

According to \cite[Remark~1.1.3, p.~141]{St93} and \cite[Exercise~3.1.4]{GrafModern}, for every cube $Q$ in $\RR^d$,
\begin{equation}\label{ecu: 10 de Jorge}
    \int_Q |f_N-(f_N)_Q|^q d\gamma \leq C \int_Q|f-f_Q|^q d\gamma,
\end{equation}
where the constant $C$ does not depend on $N$ and $Q$.

From \eqref{ecu: 10 de Jorge} we deduce that
\[\|f_N\|_{\JN_{p,q}^{\mathcal Q_a}(\RR^d,\gamma)}\leq \|f\|_{\JN_{p,q}^{\mathcal Q_a}(\RR^d,\gamma)}, \quad N\in \NN.\]
On the other hand, notice that $f_N\in L^\infty (\RR^d,\gamma)$ for every $N\in \NN$.


\label{aqui}

If $g\equiv c_0$ for some $c_0\in \CC$, our objective is clear. Otherwise, we can write $g=c_0+\sum_{i=1}^\infty g_i$ for some $c_0\in \CC$ and $0\not\equiv g_i$ being $(p',q',a)$-polymers for every $i\in \NN$.

For every polymer $g_i$, since $\|g_i\|_{(p',q',a)}\neq 0$ we can write $g_i=\sum_{j=1}^\infty b_{ij}$ where, for every $j\in \NN$, $b_{ij}\in L_0^{q'}(Q_{ij},\gamma)$ and $\supp b_{ij}\subset Q_{ij}$, being $\{Q_{ij}\}_{j\in \NN}$ a family of pairwise disjoint cubes in $\mathcal{Q}_a$, with the property that
\[\left(\sum_{j=1}^\infty \gamma(Q_{ij}) \left(\frac{1}{\gamma(Q_{ij})}\int_{Q_{ij}}|b_{ij}|^{q'}d\gamma \right)^{p'/q'}\right)^{1/p'}\leq 2\|g_i\|_{(p',q',a)}.\]

By using \eqref{ecu: eq-norma-p-polymer}, the polymeric expansion converges in $L^{p'}(\RR^d,\gamma)$ and, since $\gamma$ is a probability measure, also converges in $L^{1}(\RR^d,\gamma)$. We have that
\[\int_{\RR^d} f_Ngd\gamma =c_0\int_{\RR^d} fd\gamma+\sum_{i=1}^\infty \int_{\RR^d} f_Ng_i d\gamma, \quad N\in \NN.\]
Since $f\in L^1(\RR^d,\gamma)$ and $|f_N|\leq |f|$ for every $N\in \NN$, the Dominated Convergence Theorem leads to
\[\lim_{N\to \infty} \int_{\RR^d}f_N d\gamma=\int_{\RR^d} fd\gamma.\]

Since, for every $i\in \NN$, $\{Q_{ij}\}_{j\in \NN}$ is pairwise disjoint and $\supp(b_{ij})\subset Q_{ij}$, $j\in \NN$, we can write
\[\sum_{i=1}^\infty\int_{\RR^d} f_Ng_id\gamma =\sum_{i=1}^\infty \sum_{j=1}^\infty \int_{\RR^d} f_N b_{ij} d\gamma, \quad N\in \NN.\]

According to Proposition~\ref{prop: prop-4.1} and Theorem~\ref{thm: teo-1.1}, there exists $C>0$ such that for every $Q\in \mathcal Q_a$ and $\sigma>0$,
\[\gamma \left(\{x\in Q: |f(x)-f_Q|>\sigma\}\right)\leq C\left(\frac{K_p^{\mathcal Q_a}(f)}{\sigma}\right)^p.\]
Therefore, given $Q\in \mathcal Q_a$ and $\sigma>0$, we can write
\begin{align*}
    \gamma \left(\{x\in Q: |f(x)|>\sigma\}\right)&\leq \gamma \left(\{x\in Q: |f(x)-f_Q|>\tfrac{\sigma}{2}\}\right)+\gamma \left(\{x\in Q: |f_Q|>\tfrac{\sigma}{2}\}\right)\\
    &\leq C\left(\frac{K_p^{\mathcal Q_a}(f)}{\sigma}\right)^p+\begin{dcases*}
        0, & if $|f_Q|\leq \tfrac{\sigma}{2}$,\\ \gamma(Q), & if $|f_Q|>\tfrac{\sigma}{2}$,
    \end{dcases*}\\
    &\leq C\left(\frac{K_p^{\mathcal Q_a}(f)}{\sigma}\right)^p+\gamma(Q)C\left(\frac{2|f_Q|}{\sigma}\right)^p,
\end{align*}
and we conclude that $f\in L^{p,\infty}(Q,\gamma)$. Since $1<q<p$, $f\in L^q(Q,\gamma)$ (see the proof of Proposition~\ref{prop: prop-4.1}).

Let $i,j\in \NN$. We get $|f_Nb_{ij}|\leq |f||b_{ij}|\in L^1(Q_{ij},\gamma)$ for every $N\in \NN$. Dominated Convergence Theorem leads to
\[\lim_{N\to\infty} \int_{Q_{ij}}f_Nb_{ij}d\gamma =\int_{Q_{ij}}f b_{ij}d\gamma.\]
By proceeding as in \eqref{ecu: 8.5 de Jorge} and using first \eqref{ecu: 10 de Jorge}, we get, for each $i, N\in \NN$, 
\begin{align}\label{ecu:Z1}
    &\left|\int_{Q_{ij}} f_N b_{ij} d\gamma\right|\nonumber\\
    &\leq \gamma(Q_{ij})\left(\frac{1}{\gamma(Q_{ij})}\int_{Q_{ij}}|f_N-(f_N)_{Q_{ij}}|^q d\gamma \right)^{1/q} \left(\frac{1}{\gamma(Q_{ij})}\int_{Q_{ij}}|b_{ij}|^{q'} d\gamma \right)^{1/q'}\nonumber\\
    &\leq C\gamma(Q_{ij})\left(\frac{1}{\gamma(Q_{ij})}\int_{Q_{ij}}|f-f_{Q_{ij}}|^q d\gamma \right)^{1/q} \left(\frac{1}{\gamma(Q_{ij})}\int_{Q_{ij}}|b_{ij}|^{q'} d\gamma \right)^{1/q'},
\end{align}
and 
\begin{align}\label{ecu:Z2}
    \sum_{j=1}^\infty &\gamma(Q_{ij})\left(\frac{1}{\gamma(Q_{ij})}\int_{Q_{ij}}|f-f_{Q_{ij}}|^q d\gamma \right)^{1/q} \left(\frac{1}{\gamma(Q_{ij})}\int_{Q_{ij}}|b_{ij}|^{q'} d\gamma \right)^{1/q'}\nonumber\\
    &\leq 2\|f\|_{\JN_{p,q}^{\mathcal Q_a}(\RR^d,\gamma)}\|g_i\|_{(p',q',a)}.
\end{align}
By applying again the Dominated Convergence Theorem, we get, for every $i\in \NN$,
\[\lim_{N\to \infty}\int_{\RR^d} f_N g_i d\gamma=\lim_{N\to \infty}\sum_{j=1}^\infty \int_{Q_{ij}} f_N b_{ij} d\gamma= \sum_{i=1}^\infty\sum_{j=1}^\infty\int_{Q_{ij}} fb_{ij} d\gamma.\]
From \eqref{ecu:Z1} and \eqref{ecu:Z2}, since $\sum_{i=1}^\infty \|g_i\|_{(p',q',a)}<\infty$, again Dominated Convergence Theorem leads to
\[\lim_{N\to \infty}\int_{\RR^d} f_N(g-c_0) d\gamma=\lim_{N\to \infty}\sum_{i=1}^\infty \int_{\RR^d} f_Ng_{i} d\gamma= \sum_{i=1}^\infty\sum_{j=1}^\infty\int_{Q_{ij}} fb_{ij} d\gamma.\]

According to Lemma~\ref{lem: dualidad a} and Proposition~\ref{prop: prop-4.1} we conclude that
\[|\Lambda_f g|\leq C \|f\|_{\JN_p(\RR^d,\gamma)} \|g\|_{H_{p',q',a}(\RR^d,\gamma)},\]
with $C$ independent of $f$ and $g$.
Therefore,
$\Lambda_f\in (H_{p',q'a}(\RR^d,\gamma))'$ and
\[\|\Lambda_f\|_{(H_{p',q'a}(\RR^d,\gamma))'}\leq C \|f\|_{\JN_p(\RR^d,\gamma)}.\]

We will now see \ref{itm: teo-1.2-b}. Assume that $\Lambda\in (H_{p',q',a}(\RR^d,\gamma))'$.

Note first that for any $1<p'<q'<\infty$, $a>0$ and $Q\in \mathcal{Q}_a$, $L^{q'}_0(Q,\gamma)$ is continuously contained in $H_{p',q',a}(\RR^d,\gamma)$. Indeed, let $Q\in \mathcal{Q}_a$. Any $g\in L^{q'}_0(Q,\gamma)$ happens to be a $(p',q',a)$-polymer and
\begin{equation}\label{ecu: 7 de Jorge}
    \|g\|_{H_{p',q',a}(\RR^d,\gamma)}\leq \gamma(Q)^{1/p'-1/q'}\left(\int_Q |g|^{q'} d\gamma\right)^{1/q'}\leq \|g\|_{L^{q'}_0(Q,\gamma)}.
\end{equation}
Then, 
$\left.\Lambda\right|_{L^{q'}_0(Q,\gamma)}\in (L^{q'}(Q,\gamma))'$ 
and there exists $h_{q',Q}\in L^q(Q,\gamma)$ such that 
\[\Lambda(g) = \int_Q g h_{q',Q}d\gamma,\quad g\in {L^{q'}_0(Q,\gamma)}.\]
Replacing $h_{q',Q}$ by $h_{q',Q} - \gamma^{-1}(Q)\int_Q h_{q',Q}d\gamma$ shows that we can consider $h_{q',Q}\in L^q_0(Q,\gamma)$.

If $H$ is a measurable set in $\RR^d$ and $h$ is a measurable function defined on $H$ we say that $h$ represents $\Lambda$ on all the cubes in $\mathcal{Q}_a$ contained in $H$ when for every cube $R\in \mathcal{Q}_a$ contained in $H$, $\left.h\right|_{R}$ represents $\left.\Lambda\right|_{L^{q'}_0(R,\gamma)}$. According to this definition we have that $h_{q',Q}$ represents $\Lambda$ on all the cubes $R\in \mathcal{Q}_a$ contained in $Q$. Note that also, for every $c\in\CC$, $h_{q',Q}+c$ represents $\Lambda$ on all the cubes $R\in \mathcal{Q}_a$ contained in~$Q$.

The arguments developed in~\cite[steps I, II and II, p. 300-301]{MM} allow us to define a function $h^\Lambda$ on $\RR^d$ which represents $\Lambda$ on all the cubes in $\mathcal{Q}_a$. 

Our objective is to prove that $h^\Lambda\in \JN_{p,q}(\RR^d,\gamma)$. Then, by Proposition~\ref{prop: prop-4.1} we can conclude that $h^\Lambda \in  \JN_{p}(\RR^d,\gamma)$. To do this, we claim first that $h^\Lambda\in L^1(\RR^d,\gamma)$.

By Proposition~\ref{prop: prop-4.3} it suffices to pick any value $a\in\RR^d$. Let $a= A_d$  as in Lemma~\ref{lem: cubrimiento}\ref{itm: Ad} and consider the function $h^\Lambda$ that represents $\Lambda$ on all the cubes in $\mathcal{Q}_{A_d}$. Given $Q\in\mathcal{Q}_{A_d}$, there exists $\alpha_Q\in\CC$ such that
\[\left.h^\Lambda\right|_{Q} = h_{q',Q} + \alpha_Q.\]
Using~\eqref{ecu: 7 de Jorge},
\begin{equation*}
    \begin{split}
        \|h_{q',Q}\|_{L^1(Q,\gamma)} & \leq \gamma(Q)^{1/q'} \left( \int_Q |h_{q',Q}|^q d\gamma\right)^{1/q}
        \\ & 
        \leq \gamma(Q)^{1/q'} \sup_{\|g\|_{L^{q'}_0(Q,\gamma)}\leq 1} \left|\int_Q h_{q',Q} g d\gamma \right|
        \\ & 
          \leq \gamma(Q)^{1/q'}  
          \| \Lambda \|_{(H_{p',q',a}(\RR^d,\gamma))'} 
    \sup_{\|g\|_{L^{q'}_0(Q,\gamma)}\leq 1} \| g\|_{H_{p',q',a}(\RR^d,\gamma)}
    \\ & \leq 
    \gamma(Q)^{1/q'}  
          \| \Lambda \|_{(H_{p',q',a}(\RR^d,\gamma))'} .
    \end{split}
\end{equation*}
We get
\begin{equation*}
    \begin{split}
         \|h^\Lambda\|_{L^1(Q,\gamma)} &
         \leq  \|h_{q',Q}\|_{L^1(Q,\gamma)} +|\alpha_Q| \gamma(Q)
         \\ & \leq  \gamma(Q)^{1/q'}  
          \| \Lambda \|_{(H_{p',q',a}(\RR^d,\gamma))'} + |\alpha_Q| \gamma(Q).
    \end{split}
\end{equation*}
By proceeding as in~\cite[p. 302]{MM} we deduce that
\[|\alpha_Q| \leq C {K_Q}  \| \Lambda \|_{(H_{p',q',a}(\RR^d,\gamma))'} \]
Then,
\[\|h^\Lambda\|_{L^1(Q,\gamma)} 
         \leq C\left( \gamma(Q)^{1/q'} + K_Q \gamma(Q)\right)  \| \Lambda \|_{(H_{p',q',a}(\RR^d,\gamma))'}. \]

Now, we consider the covering $\{Q_n\}_{n\in\mathbb{N}}$ given in Lemma~\ref{lem: cubrimiento} and write
\begin{equation*}
    \begin{split}
\|h^\Lambda\|_{L^1(\mathbb{R}^d,\gamma)} & \leq \sum_{n\in\mathbb{N}}\|h^\Lambda\|_{L^1(Q_n,\gamma)} 
         \\ & \leq C\| \Lambda \|_{(H_{p',q',a}(\RR^d,\gamma))'}\sum_{n\in\mathbb{N}}\left( \gamma(Q_n)^{1/q'} + K_{Q_n} \gamma(Q_n)\right)  .     
    \end{split}
\end{equation*}

According to~\cite[Proposition 2.1]{MM} there exists $C>1$ such that for every $Q\in\mathcal{Q}_{A_d}$ and every $x\in Q$
\[\frac{1}{C}\leq e^{|c_Q|^2-|x|^2}\leq C.\]
It follows that, for each $n\in\NN$
\[\gamma(Q_n)\leq Ce^{-|c_{Q_n}|^2}\ell_{Q_n}^d
\]



By Lemmas~\ref{lem: cubrimiento}~\ref{itm:layer properties} and \ref{lem: M(B)}, if $Q$ is a cube in the $k$-th layer of $\{Q_n\}$ we get
\[\gamma(Q)\leq C e^{-ck} k^{-d/2}, \quad \text{and} \quad K_Q\leq Ck, k\in  \NN.\]
Also by Lemma~\ref{lem: cubrimiento}~\ref{itm:layer properties}, the number of cubes in the $k$-th layer of $\{Q_n\}$ is controlled by $Ck^{d-1}$ for every $k\in \NN$. Note that the constant $C$ in the last three appearances only depend on the dimension $d$.

Therefore, we obtain
\[\sum_{n=1}^\infty \gamma(Q_n)^{1/q'} \leq C \sum_{k=1}^\infty \left( e^{-ck} k^{-d/2} k^{d-1}\right)^{1/q'} <\infty\]
and
\[\sum_{n=1}^\infty K_{Q_n} \gamma(Q_n) \leq C \sum_{k=1}^\infty  e^{-ck} k^{-d/2} k^{d-1} <\infty.\]
Thus, we have proved that $h^\Lambda\in L^1(\mathbb{R}^d,\gamma)$.

Using the ideas in~\cite[p. 601-602]{DHKY}, we are going to see now that
\[K^{\mathcal{Q}_{A_d}}_{p,q}(h^\Lambda)\leq C \|\Lambda\|_{(H_{p',q',a}(\RR^d,\gamma))'},\]
where $C$ does not depend on $\Lambda$. 

We consider a finite pairwise disjoint family of cubes $\{Q_j\}_{j=1}^N\subset \mathcal{Q}_{A_d}$. We can write, for every $j=1,\dots,N$,
\[\left(\frac{1}{\gamma(Q_j)}\int_{Q_j}|h^{\Lambda}-h^{\Lambda}_{Q_j}|^qd\gamma\right)^{1/q} =
\sup \frac{1}{\gamma(Q_j)}\int_{Q_j} h^{\Lambda} b_j d\gamma,\]
where the supremum is taken over all the functions $b_j\in L^{q'}_0(Q_j)$ such that 
\begin{equation}\label{eq: aux bj}
    \frac{1}{\gamma(Q_j)}\int_{Q_j}|b_j|^{q'}d\gamma = 1.
\end{equation}

Let $\epsilon>0$. We choose, for every $j=1,\dots,N$, $b_j\in L^{q'}_0(Q_j,\gamma)$ such that~\eqref{eq: aux bj} holds and
\[\left(\frac{1}{\gamma(Q_j)}\int_{Q_j}|h^{\Lambda}-h^{\Lambda}_{Q_j}|^qd\gamma\right)^{1/q}  = \frac{1}{\gamma(Q_j)} \int_{Q_j}h^{\Lambda} b_j d\gamma + \frac{\epsilon}{\lambda_j N \gamma(Q_j)}.\]
Here $\{\lambda_j\}_{j=1}^N\subset (0,\infty)$ is such that $\sum_{j=1}^N \gamma(Q_j)\lambda_j^{p'}=1$ and
\begin{align*}
    &\left(\sum_{j=1}^N \gamma(Q_j) \left(\frac{1}{\gamma(Q_j)}\int_{Q_j}|h^{\Lambda}-h^{\Lambda}_{Q_j}|^q d\gamma\right)^{p/q}\right)^{1/p}\\
&\qquad =\sum_{j=1}^N \gamma(Q_j)\lambda_j \left(\frac{1}{\gamma(Q_j)}\int_{Q_j}|h^{\Lambda}-h^{\Lambda}_{Q_j}|^q d\gamma\right)^{1/q}.
\end{align*}

We define $g=\sum_{j=1}^N\lambda_j b_j$. It follows that

\begin{equation*}
    \begin{split}
        \left(\sum_{j=1}^N \gamma(Q_j) \left(\frac{1}{\gamma(Q_j)}\int_{Q_j}|h^{\Lambda}-h^{\Lambda}_{Q_j}|^q d\gamma\right)^{p/q}\right)^{1/p}
& =
\sum_{j=1}^N \lambda_j \int_{Q_j}h^{\Lambda} b_j d\gamma + \epsilon
\\ & =
\int_{\RR^d}h^{\Lambda} \sum_{j=1}^N \lambda_j b_j d\gamma + \epsilon
\\ & =
\int_{\RR^d}h^{\Lambda} g d\gamma + \epsilon
\\ & =
\Lambda(g) + \epsilon.
    \end{split}
\end{equation*}

The function $g$ is a $(p',q',A_d)$-polymer and
\begin{align*}
    \|g\|_{(p',q',A_d)}
    &\leq \left(\sum_{j=1}^N
     \gamma(Q_j) \left(\frac{1}{\gamma(Q_j)}\int_{Q_j}|\lambda_j b_j|^{q'} d\gamma\right)^{p'/q'}\right)^{1/p'}\\
     &= \left(\sum_{j=1}^N
 \gamma(Q_j)\lambda_j^{p'}\right)^{1/p'} = 1.
 \end{align*}

Then, $\|g\|_{H_{p',q',A_d}(\RR^d,\gamma)}\leq 1$ and $|\Lambda(g)|\leq \|\Lambda\|_{(H_{p',q',A_d}(\RR^d,\gamma))'}$. We obtain 
\begin{equation*}
    \left(\sum_{j=1}^N \gamma(Q_j) \left(\frac{1}{\gamma(Q_j)}\int_{Q_j}|h^{\Lambda}-h^{\Lambda}_{Q_j}|^q d\gamma\right)^{p/q}\right)^{1/p} \leq 
    \|\Lambda\|_{(H_{p',q',A_d}(\RR^d,\gamma))'} + \epsilon.
\end{equation*}

It follows that
\begin{equation*}
    \inf \left(\sum_{j=1}^N \gamma(Q_j) \left(\frac{1}{\gamma(Q_j)}\int_{Q_j}|h^{\Lambda}-h^{\Lambda}_{Q_j}|^q d\gamma\right)^{p/q}\right)^{1/p} \leq 
    \|\Lambda\|_{(H_{p',q',A_d}(\RR^d,\gamma))'} + \epsilon,
\end{equation*}
where the infimum is taken over all the families $\{Q_j\}_{j=1}^N$ of pairwise disjoint cubes in $\mathcal{Q}_{A_d}$, with $N\in\mathbb{N}$. The arbitrariness of $\epsilon>0$ allow us to conclude that
\[K^{\mathcal{Q}_{A_d}}_{p,q}(h^\Lambda)\leq C \|\Lambda\|_{(H_{p',q',a}(\RR^d,\gamma))'},\]
and, by virtue of Proposition~\ref{prop: prop-4.1}, that $h_\Lambda\in \JN_p(\RR^ d,\gamma)$.

We are going to see that there exists $\alpha\in\CC$ such that $\Lambda = \Lambda_f$ where ${f=h^{\Lambda} + \alpha}$. Let $g\in H_{p',q',a}(\RR^d,\gamma)\setminus \{0\}$. 

Suppose that $c_0\in \CC$ and consider $g=c_0$. We have
\[\Lambda(g)=c_0\Lambda(1) =c_0\left(\Lambda(1)-\int_{\RR^d} h^\Lambda d\gamma\right)+\int_{\RR^d} g h^\Lambda d\gamma.\]
We define $\alpha=\Lambda(1)-\int_{\RR^d} h^\Lambda d\gamma$, so it is clear that $f=h^{\Lambda} + \alpha\in \JN_p(\RR^d,\gamma)$ and $\Lambda_f(g)=\Lambda(g).$

Suppose now that $g$ is a nonconstant function in $\JN_p(\RR^d,\gamma)$. Therefore, $A:=\inf \sum_{i=1}^\infty \|g_i\|_{(p',q',a)}>0$, where the infimum is taken over all the sequences $\{g_i\}_{i\in \NN}$ of $(p',q',a)$-polymers such that $\sum_{i=1}^\infty \|g_i\|_{(p',q',a)}<\infty $ and $g = c_0 + \sum_{i=1}^\infty g_i$, for some $c_0\in\CC$. 

Thus, there exists a sequence $\{g_i\}_{i\in \NN}$ of $(p',q',a)$-polymers such that $g_i\not\equiv 0$ for every $i\in \NN$, $g = c_0 + \sum_{i=1}^\infty g_i$, with
\[\sum_{i=1}^\infty \|g_i\|_{(p',q',a)}\leq 2A.\]

For every $i\in\NN$, we write $g_i = \sum_{j=1}^\infty b_{ij}$, where for every $j\in\NN$, $b_{ij}\in L^{q'}_0(Q_{ij},\gamma)$ and $\supp b_{ij}\subset Q_{ij}$, being $\{Q_{ij}\}_{j=1}^\infty$ is a family of pairwise disjoint cubes in $\mathcal{Q}_{A_d}$ such that
\[\left( \sum_{j=1}^{\infty} \gamma(Q_{ij}) \left(\frac{1}{Q_{ij}}\int_{Q_{ij}} |b_{ij}|^{q'}d\gamma\right)^{p'/q'}\right)^{1/q'}\leq 2 \|g_i\|_{(p',q',a)}.\]

By proceeding as in the proof of Proposition~\ref{prop: prop-4.2} we can see that there exists two sequences $\{i_k\}_{k\in\NN}$ and $\{j_k\}_{k\in\NN}$ of nonnegative integers such that
\[c_0 + \sum_{i=1}^{i_k} \sum_{j=1}^{j_k} b_{ij} \longrightarrow g,\]
as $k\rightarrow \infty$, in $H_{p',q',a}(\RR^d,\gamma)$. Since $\Lambda\in (H_{p',q',a}(\RR^d,\gamma))'$ we get
\begin{equation*}
    \begin{split}
        \Lambda(g) & = c_0\Lambda(1) + \lim_{k\rightarrow \infty} \sum_{i=1}^{i_k} \sum_{j=1}^{j_k} \Lambda(b_{ij})
        \\ & = c_0\Lambda(1) +  \sum_{i=1}^{\infty} \sum_{j=1}^{\infty} \int_{\RR^d} h^{\Lambda} b_{ij} d\gamma,
    \end{split}
\end{equation*}
where the series is absolutely convergent (see part~\ref{itm: teo-1.2-a} of this proof). Then,
\[\Lambda(g) = c_0\left(\Lambda(1) - \int_{\RR^d} h^{\Lambda}d\gamma\right) + \Lambda_{h^\Lambda}(g).\]

We define $f = h^{\Lambda} - \int_{\RR^d} h^{\Lambda}d\gamma + \Lambda(1):=h^\Lambda+\alpha.$ It is clear that $f\in \JN_p(\RR^d,\gamma)$ and we also have that 
\begin{equation*}
    \begin{split}
        \Lambda_f(g) & = c_0 \int_{\RR^d} fd\gamma + \sum_{i=1}^{\infty} \sum_{j=1}^{\infty} \int_{\RR^d} f b_{ij} d\gamma
        \\ & = c_0\Lambda(1)   + \sum_{i=1}^{\infty} \sum_{j=1}^{\infty} \int_{\RR^d} h^{\Lambda} b_{ij} d\gamma
        \\& = \Lambda(g).
    \end{split}
\end{equation*}
The proof of Theorem~\ref{thm: teo-1.2} is now complete.





\end{document}